\theoremstyle{definition}
\newtheorem{theorem}{Theorem}[section]
\newtheorem{lemma}[theorem]{Lemma}
\newtheorem{proposition}[theorem]{Proposition}
\definecolor{Green}{RGB}{0,176,80}
\newtheorem{definition}[theorem]{Definition}
\newtheorem{remark}[theorem]{Remark}
\newtheorem{example}[theorem]{Example}
\newtheorem{conjecture}[theorem]{Conjecture}
\newtheorem{collary}[theorem]{Collary}
\newcommand{\HH}{\mathcal{H}}
\newcommand{\SDT}{\textrm{SDT}}
\definecolor{darkred}{rgb}{0.7,0,0} 
\newcommand{\defn}[1]{{\color{darkred}\emph{#1}}}
\title{Pieri Rule for GQs Computed via Strict Decomposition Tableaux}
\author[Arroyo]{Joshua Arroyo}
\address{Department of Mathematics,  University of Florida, Gainesville, FL 32611}
\email{joshuaarroyo@ufl.edu}
\begin{document}

\maketitle

\begin{abstract}
    $GQ$ functions are symmetric functions indexed by strict partitions that represent $K$-theoretic Schubert classes in the Lagrangian Grassmannian.
    Buch and Ravikumar \cite{buch2012pieri} proved a Pieri rule for expanding $GQ_{\lambda}\cdot GQ_p$ in terms of $GQ$s via certain shifted skew tableaux.
    In this paper we identify an alternative family of shifted tableaux that enumerates this Pieri rule.
    This partially resolves a conjecture from previous work that these tableaux enumerate the expansion of $GQ_{\lambda} \cdot GQ_{\tau}$ in terms of $GQ$s where $\tau$ is a trapezoid shape.
\end{abstract}

\maketitle

\section{Introduction}

In \cite{ikeda2013k} Ikeda and Naruse introduce $GP$ and $GQ$ functions, which are $K$-theoretic analogues of Schur $P$-- and $Q$--functions.
$GP$s and $GQ$s are representatives of $K$-theoretic Schubert classes of the orthogonal and Lagrangian Grassmannian, respectively.
Both $GP$ and $GQ$ functions are bases for rings and as such products of them can be expanded in terms of their respective functions.
Combinatorial formulas for these expansion are Littlewood-Richardson rules.
For the $GP$ functions a Littlewood-Richardson rule for expanding $GP_{\lambda} \cdot GP_{\mu}$ in terms of $GP$s is known, first proven in \cite{clifford2014ktheoretic}.
This naturally leads to the question of what such a $GQ$ Littlewood-Richardson rule would be.
In \cite{lewis2024combinatorial} Lewis and Marberg proved that $GQ$ functions form the basis of a ring, thus $GQ_{\lambda} \cdot GQ_{\mu}$ can be finitely expanded into $GQ$s.
Currently, the furthest progress to such a rule is a Pieri rule for expanding $GQ_{\lambda}\cdot GQ_p$, done by Buch and Ravikumar in \cite{buch2012pieri}.
The main result of this paper is to give an alternate presentation of the known Pieri rule, computed via a new type of tableaux:
\begin{restatable}{theorem}{arroyoPieri}\label{t:arroyoPieri}
    For a strict partition $\lambda$ and $p\in \mathbb{Z}$,
    \begin{equation*}
        GQ_{\lambda} \cdot GQ_p = \sum_{\mu} |\textrm{SDT}(\mu, \lambda,p)|\; \beta^{|\mu|-|\lambda|-p} GQ_{\mu}
    \end{equation*}
    where $\textrm{SDT}(\mu, \lambda,p)$ is the set of strict decomposition tableaux of shape $\mu$ with reading word equivalent to $w(\lambda, 1, p)$.
\end{restatable}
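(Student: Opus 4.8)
\medskip
\noindent\emph{Proof strategy.} The plan is to reduce Theorem~\ref{t:arroyoPieri} to the Pieri rule of Buch and Ravikumar \cite{buch2012pieri}, which expresses the same product as
\[
GQ_{\lambda}\cdot GQ_{p}=\sum_{\mu} N^{\mu}_{\lambda,p}\,\beta^{|\mu|-|\lambda|-p}\,GQ_{\mu},
\]
where $N^{\mu}_{\lambda,p}$ counts the fillings of the shifted skew diagram $\mu/\lambda$ that satisfy Buch--Ravikumar's conditions. Since the $GQ_{\mu}$ are linearly independent --- in fact a basis, by Lewis and Marberg \cite{lewis2024combinatorial} --- Theorem~\ref{t:arroyoPieri} is equivalent to the combinatorial identity $|\SDT(\mu,\lambda,p)| = N^{\mu}_{\lambda,p}$ for every strict partition $\mu$, and I will prove this by an explicit bijection that sends the statistic $|\mu|-|\lambda|-p$ to itself; the degenerate cases $p\le 0$ are immediate and are handled first.

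The bijection will be built from the $K$-theoretic shifted jeu de taquin (equivalently, shifted Hecke insertion) operations underlying the theory of strict decomposition tableaux, which act within reading-word equivalence classes. Starting from a Buch--Ravikumar filling of $\mu/\lambda$, I would glue it above the canonical (superstandard) SDT of shape $\lambda$ to obtain a shifted set-valued filling of the straight shape $\mu$, and then rectify by $K$-theoretic shifted slides. By construction the reading word of the glued object lies in the equivalence class of $w(\lambda,1,p)$, and since the slides preserve this class, the rectification lands in $\SDT(\mu,\lambda,p)$. The inverse map runs the slides in reverse, pushing the content coming from $GQ_{p}$ out through the inner border $\lambda$; because $\SDT(\mu,\lambda,p)$ consists exactly of the straight-shape SDT with reading word equivalent to $w(\lambda,1,p)$, this reverse rectification is well defined and returns a skew filling of $\mu/\lambda$.

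What remains is to check that the two maps are mutually inverse; that the image of the forward map is \emph{precisely} the Buch--Ravikumar family, which I would verify by a cell-by-cell translation between the hook-word conditions governing a rectified row and Buch--Ravikumar's column-strictness, interval, and connectedness conditions (with the single-row word for $GQ_{p}$ matching the role of $p$ in their rule); and that the number of boxes of $\mu/\lambda$ in excess of $p$, equivalently the number of merges performed during rectification, equals the common $\beta$-exponent $|\mu|-|\lambda|-p$.

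The main obstacle I anticipate is the confluence and well-definedness of the $K$-theoretic shifted slides on these semistandard, set-valued tableaux --- that no slide destroys the strict-decomposition shape of a row and that the result is independent of the order of slides --- together with the local matching just described, namely showing that the rectified skew fillings coincide exactly with Buch--Ravikumar's family rather than with some a priori larger or smaller set. Once these local facts are established, the symmetric-function identity and the $\beta$-degree bookkeeping are routine.
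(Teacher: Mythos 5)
Your first step---using linear independence of the $GQ_\mu$ to reduce the theorem to the combinatorial identity $|\SDT(\mu,\lambda,p)| = c^{\mu}_{\lambda,p}$ with the Buch--Ravikumar coefficients---is sound and matches the paper's setup. The gap is in how you propose to prove that identity. Your bijection rests on a ``$K$-theoretic shifted jeu de taquin / shifted slide'' theory that acts on strict decomposition tableaux and preserves their reading-word equivalence class, but no such machinery exists here: SDTs are not increasing or set-valued shifted tableaux (their rows are unimodal), their reading words live in the type $C$ $0$-Hecke monoid of signed permutations rather than in a type $A$ plactic/Hecke setting, and neither this paper nor the cited literature provides slides or a rectification procedure compatible with the SDT conditions and with type $C$ Hecke equivalence. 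The items you list as ``anticipated obstacles''---confluence and well-definedness of the slides, preservation of the SDT configuration conditions, and the exact identification of the rectification image with the Buch--Ravikumar family---are precisely the entire mathematical content of the claim, not routine verifications; as written, the proposal is a strategy whose essential ingredients are all deferred, and the central one (the slide calculus itself) is not known to exist.

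The paper avoids any bijection with Buch--Ravikumar's fillings. Instead it shows directly that $|\SDT(\mu,\lambda,p)|$ satisfies the same base cases and the same recursion as the coefficient $C(\theta,p)$ of Theorem~\ref{thm:buchravikumar}: vanishing when $\theta=\mu/\lambda$ is not a rim (Lemma~\ref{notrim}), explicit enumeration when $\theta$ is a single row or column, on or off the diagonal (Lemmas~\ref{lem:colDiagonal}, \ref{lem:rowDiag}, \ref{lem:colNotDiag}, \ref{lem:rowNotDiag}), and the row/column recursions obtained by stripping the north-east arm (Lemmas~\ref{lem:rowRecursion} and \ref{lem:colRecursion}), all driven by a structural analysis of where the ``large'' and ``small'' entries of an SDT can sit; induction then gives $|\SDT(\mu,\lambda,p)| = c^{\mu}_{\lambda,p}$. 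If you want to salvage your approach you would first have to build and prove correct the slide calculus you invoke, which is a substantially harder project than the recursion check the paper carries out.
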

Strict decomposition tableaux (SDT) were introduced in previous work \cite{arroyo2025typeckstanleysymmetric} as the insertion tableau in a new insertion method for type $C$ Hecke words.
The insertion method conjecturally give combinatorial formulas for Type $C$ $K$-Stanley symmetric functions, $G^C_w$, in terms of $GQ$'s.
In \cite{arroyo2025typeckstanleysymmetric} we showed that for an arbitrary partition $\lambda$ and a trapezoid partition, $\tau$, that $GQ_{\lambda}\cdot GQ_{\tau}$ is a single $G^C_w$.
Therefore, our conjectural formula for Type $C$ $K$-Stanley's also gives a Littlewood-Richardson rule for $GQ$'s when one partition is a trapezoid.

\begin{restatable}{conjecture}{trapezoidConjecture}
\cite{arroyo2025typeckstanleysymmetric}
    \label{conj:trapezoid}
    Let $a \leq b$ positive integers and $\lambda$ a strict shape with $\lambda_1 < k$.
    Then
    \[
GQ_\lambda \cdot GQ_{\tau(a,b)} = \sum_{\nu} |\SDT{(\mu,w(\lambda,a,b))}| \beta^{|\mu|-|\lambda|-ab} GQ_\mu,
    \]
where $\SDT{(\mu,w(\lambda,a,b))}$ is the set of SDT of shape $\mu$ with reading word a $0$-Hecke expression for $w(\lambda,a,b)$.
\end{restatable}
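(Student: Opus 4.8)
We outline a possible approach to Conjecture~\ref{conj:trapezoid}. The plan is to bootstrap from Theorem~\ref{t:arroyoPieri} by induction on $a$, using the layered structure of the trapezoid. The starting point is the identification $GQ_\lambda\cdot GQ_{\tau(a,b)}=G^C_{w(\lambda,a,b)}$ from \cite{arroyo2025typeckstanleysymmetric}, which reduces the conjecture to the combinatorial identity
\[
G^C_{w(\lambda,a,b)}=\sum_{\mu}|\SDT(\mu,w(\lambda,a,b))|\;\beta^{|\mu|-|\lambda|-ab}\,GQ_{\mu}.
\]
Unwinding the definition of $G^C_w$ as the generating function of $0$-Hecke expressions for $w$ decorated by compatible sequences, this is equivalent to the assertion that SDT insertion is a weight- and $\beta$-degree-preserving bijection from such decorated expressions for $w(\lambda,a,b)$ onto pairs $(P,Q)$, where $P\in\SDT(\mu,w(\lambda,a,b))$ and $Q$ is the (set-valued, shifted) recording tableau, under which for each fixed $P$ of shape $\mu$ the recording tableaux contribute exactly $\beta^{|\mu|-|\lambda|-ab}GQ_\mu$. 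The base case $a=1$, with $b=p$, is precisely Theorem~\ref{t:arroyoPieri}.

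For the inductive step, one uses that deleting a border row of $\tau(a,b)$ produces a strictly smaller trapezoid -- for the usual normalization, deleting the longest row of $\tau(a,b)$ leaves $\tau(a-1,b-1)$, and the deleted row has length $m=a+b-1$ with $(a-1)(b-1)+m=ab$ -- and that the word $w(\lambda,a,b)$ should admit a matching factorization into a ``row block'' playing the role of $GQ_m$ and a $0$-Hecke expression governing $GQ_\lambda\cdot GQ_{\tau(a-1,b-1)}$. The key lemma would be a \emph{compatibility} statement: SDT insertion of such a concatenated word can be carried out by first inserting the $w(\lambda,a-1,b-1)$-part -- which by the inductive hypothesis lands in some $P'\in\SDT(\mu',w(\lambda,a-1,b-1))$ with recording data enumerating $\beta^{|\mu'|-|\lambda|-(a-1)(b-1)}GQ_{\mu'}$ -- and then inserting the row block into $P'$, a step which is exactly the single-row Pieri situation governed by Theorem~\ref{t:arroyoPieri}. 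Granting this, the two set-valued recording tableaux glue into a single recording tableau, the resulting SDT of shape $\mu$ are precisely those in $\SDT(\mu,w(\lambda,a,b))$, and summing over the intermediate $\mu'$ produces the claimed count, the $\beta$-degrees adding to $|\mu|-|\lambda|-ab$ by the identity above.

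The crux -- and the step I expect to absorb essentially all of the work -- is this compatibility lemma: that SDT insertion of a concatenation is computed by inserting the two halves in succession, and that the second insertion does not trigger new interactions reaching back into the first block. In the classical (non-$K$-theoretic) shifted world such a statement follows from shifted Knuth equivalence together with a jeu-de-taquin or rectification argument, but SDT insertion is a Hecke (idempotent) insertion on $0$-Hecke words, so one must contend with bumping paths that may fail to add a box, recording data that is genuinely set-valued, and the fact that the defining conditions on an SDT are neither the shifted semistandard conditions nor the decomposition-tableau conditions that govern straight shapes; establishing the required confluence of shifted Hecke bumping, and the associativity of the set-valued recording, is where the real difficulty lies. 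An alternative to induction on $a$ would be to prove the ambient conjecture of \cite{arroyo2025typeckstanleysymmetric} outright -- that SDT insertion computes $G^C_w$ in terms of $GQ$ for \emph{every} $w$ -- by matching the recording side against a known tableau formula for $GQ_\mu$, such as the shifted set-valued marked tableaux coming from \cite{ikeda2013k}; this is cleaner to state but replaces the row-peeling lemma with a single global bijection of comparable difficulty.
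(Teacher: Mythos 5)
This statement is a \emph{conjecture} in the paper: the paper does not prove it, and explicitly claims only to resolve the restriction $a=1$, which is Theorem~\ref{t:arroyoPieri}. Your proposal is likewise not a proof but a plan, and you candidly flag that the ``compatibility lemma'' carries all the weight. Beyond that admitted gap, there is a structural flaw in the inductive step as described. Peeling the longest row off $\tau(a,b)$ does leave $\tau(a-1,b-1)$ as a \emph{shape}, and the arithmetic $(a-1)(b-1)+(a+b-1)=ab$ is fine, but $GQ_{\tau(a,b)} \neq GQ_{\tau(a-1,b-1)}\cdot GQ_{a+b-1}$ as symmetric functions: already $GQ_{(1)}\cdot GQ_{(3)}$ expands with a term $GQ_{(4)}$ and a multiplicity on $GQ_{(3,1)}=GQ_{\tau(2,2)}$, so iterating ``insert the $\tau(a-1,b-1)$ part, then apply the single-row Pieri rule'' computes $GQ_\lambda\cdot GQ_{\tau(a-1,b-1)}\cdot GQ_{a+b-1}$, which is not the left-hand side of the conjecture. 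The identity $GQ_\lambda\cdot GQ_{\tau(a,b)}=G^C_{w(\lambda,a,b)}$ does not rescue this, because $G^C$ of a concatenated word is not the product of the $G^C$'s of the factors; the extra terms would have to be shown to cancel, which is a substantial new argument you have not supplied. Relatedly, your second insertion step is not ``exactly'' Theorem~\ref{t:arroyoPieri}: that theorem counts SDT whose reading word is a Grassmannian word $w(\nu,1,p)$ for a strict partition $\nu$, whereas the intermediate tableau $P'$ has reading word $w(\lambda,a-1,b-1)$, a different class of permutation, so the theorem does not apply to it.

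It is also worth noting that the paper's proof of the $a=1$ case does not proceed by insertion or recording tableaux at all: it directly characterizes the elements of $\SDT(\mu,\lambda,p)$ (Lemmas~\ref{tabsetup}--\ref{lem:colRecursion}) and shows their count satisfies the Buch--Ravikumar recursion of Theorem~\ref{thm:buchravikumar}. So even your base case, framed as a weight-preserving insertion bijection onto pairs $(P,Q)$, is stronger than what Theorem~\ref{t:arroyoPieri} actually provides, and the paper's closing remark records concrete obstructions to extending its structural lemmas to $a>1$ (large/small numbers need not be increasing, $\mu/\lambda$ need not be a rim). In short: the target is open, the paper offers no proof to compare against, and your outline has a fatal decomposition error in addition to the acknowledged missing lemma.
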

When $a=1$, this formula restricts down to the Pieri rule for $GQ$s.
The fact that this formula agrees with the known Pieri rule despite being described by completely different objects gives evidence that the conjecture is true.

\section{Background}

\subsection{Signed Permutations}
A \defn{signed permutation} $w$ is a permutation of the elements $[\overline{n}]\cup[n]$ such that $w(i)=-w(\overline{i})$.
For example, $v = \overline{1}\overline{3}2\overline{2}31$ is a signed permutation.
By antisymmetry, $w$ is determined by $w([n])$.
When writing a signed permutation, we frequently omit $w([\overline{n}])$; for instance $v = \overline{2}31$.
Signed permutations with composition form a group $W_n$, which is the Coxeter group of Type B/C.
Viewed as a Coxeter group, the generators of $W_n$ are $s_0, s_1, \ldots s_{n-1}$ where $s_0 = (\overline{1},1)$ swaps positions $\overline{1}$ and $1$ and $s_i = (\overline{i{+}1},\overline{i})(i,i{+}1)$ simultaneously swaps positions $i$ and $i+1$ and positions $\overline{i}$ and $\overline{i+1}$ for $i>0$.
These generators satisfy the relations
\begin{enumerate}
    \item Self-inverse: $s_i^2 = 1$ for $i =0,1,\dots,n-1$;
    \item Commutation: $s_i s_j = s_i s_j$ if $|i-j| \geq 2$;
    \item Braid Relation: $s_i s_{i+1} s_i = s_{i+1} s_i s_{i+1}$ if $i > 0$;
    \item Long Braid Relation: $s_1 s_0 s_1 s_0 = s_0 s_1 s_0 s_1$.
\end{enumerate}

The \defn{length} of $w \in W_n$ is the minimum number of generators needed to express $w$, denoted $\ell(w)$.
A \defn{reduced expression} for $w$ is an expression of the form
\[
w = s_{a_1} \cdot s_{a_2} \cdot \ldots \cdot s_{a_p}
\]
where $p = \ell(w)$.
The associated word $(a_1,\dots,a_p)$ is a \defn{reduced word} for $w$.
The Matsumoto--Tits theorem says that the set of reduced words for a given $w$ is connected by the commutation, braid and long braid relations.
For example, the reduced words of $w = \overline{2}31$ are $(1,0,2)$ and $(1,2,0)$.\\

The \defn{0--Hecke monoid} $(W_n,\circ)$ is the monoid on signed permutations obtained by replacing the Self-inverse relation with the Idempotent Relation $s_i \circ s_i = s_i$.
A \defn{0--Hecke expression} for $w \in W_n$ is an expression of the form
\[
w = s_{a_1} \circ s_{a_2} \circ \ldots \circ s_{a_p}
\]
with associated \defn{Hecke word} $(a_1,a_2,\dots,a_p)$.
Let $\HH_p(w)$ be the set of Hecke words for $w$ with $p$ letters and $\HH(w) = \cup_{p \geq 0} \HH_p(w)$.
For example, with $w = \overline{2}31$ we have $\HH_3(w) = \{(1,0,2),(1,2,0)\}$ and
\begin{align*}
\HH_4(w) = \{&(1,0,0,2),(1,0,2,0),(1,0,2,2),(1,1,0,2), (1,2,0,0),(1,2,0,2),(1,2,2,0)\}.
\end{align*}
Note $\HH_{\ell(w)}(w)$ is the set of reduced words for $w$ and $\HH_m(w)$ is empty when $m < \ell(w)$.\\

\noindent For a strict partition $\lambda = (\lambda_1 > \cdots \lambda_k)$, the \defn{Grassmannian signed permutation} of size $n$ and shape $\lambda$ is
    \begin{equation}
        w(\lambda, n) = \overline{\lambda_1} \overline{\lambda_2} \ldots \overline{\lambda_k} i_1 \ldots i_{n-k}
    \end{equation}
    where $n \geq \lambda_1$ and $i_1 < i_2 < \ldots i_{n-k} = [n] \setminus {\lambda_1, \ldots, \lambda_k}$.
    Further, we define the following permutation
    \begin{align*}
        w(\lambda, a, b, n) =\overline{\lambda_1} \overline{\lambda_2} \ldots \overline{\lambda_k}\; i_1 \ldots i_{n-k}\; n{+}a{+}1\; n{+}a{+}2\; \ldots n{+}a{+}b\; n{+}1\; n{+}2\ldots n{+}a 
    \end{align*}
    If the $n$ is omitted in the notation, then take $n = \lambda_1$.
    Notice the following
    \begin{align*}
        &\lambda_{k}{-}1\; \lambda_{k}{-}2 \ldots 0\; \lambda_{k-1}{-}1 \ldots 0 \ldots \lambda_{1}{-}1 \ldots 0\\
        &n{+}a\ldots n{+}a{+}b{-}1\; n{+}a{-}1 \ldots n{+}a{+}b{-}2\ldots n{+}1\ldots n{+}a 
    \end{align*}
    is a reduced word for $w(\lambda, a, b, n)$ and is braid-free.

\subsection{Tableaux}
A \defn{strict partition} is a descending sequence $\lambda = (\lambda_1 > \lambda_2 > \dots > \lambda_k)$ of positive integers, let $\ell(\lambda)=k$ be the number of parts of $\lambda$.
Corresponding to each partition, $\lambda$, is the \defn{Shifted Young Diagram} of $\lambda$ defined as the set:
\begin{equation*}
    D_{\lambda} = \{(i,j) \in \mathbb{Z}_+^2: i \leq j \leq \lambda_i + j\}.
\end{equation*}
A strict partition $\mu$ is contained in $\lambda$, denoted $\mu \subseteq \lambda$, if for all $i$ $\mu_i \leq \lambda_i$.
For such cases $\lambda / \mu$ is a skew shape with associated skew shifted Young diagram $D_{\lambda / \mu} = D_{\lambda} / D_{\mu}$.
Let $\theta$ be a shifted skew diagram.
    $\theta$ is a \defn{row} if all its boxes are in the same row.
    Likewise, $\theta$ is a \defn{column} if all its boxes are in the same column.
    $\theta$ is a \defn{rim} if it contains no set of four boxes that form a connected square.
    If $\theta$ is a rim, then its \defn{north-east arm} is the largest row or column that can be formed by intersecting $\theta$ with a square whose upper right corner is the most north-east box in $\theta$.
    Let $\hat{\theta}$ be the diagram obtain by removing the \defn{north-east arm} from $\theta$.

\begin{figure}[h!]
    \centering
    \begin{ytableau}
        \none & & *(green) & *(green)\\
        \none &
    \end{ytableau}
    \caption{A rim of skew shape $(4,1)/(3)$ whose north-east arm is the two rightmost boxes in the first row.}
    \label{fig:placeholder}
\end{figure}

A special partition shape is a \defn{shifted trapezoid} which for positive integers $a\leq b$ is defined as
\begin{equation}
    \tau(a,b) = (b+a-1, b+a-3,\ldots, b-a+1).
\end{equation}
We say $\tau(a,b)$ has height $a$ and top width $b+a-1$.
Note that when $a=1$, $\tau(a,b)$ is a single row.\\

\begin{figure}[h!]
    \centering
    \ydiagram{0+5,1+3}
    \caption{A tableau of shape $\tau(2,4)$}
    \label{fig:placeholder}
\end{figure}

    Let $\lambda$ be a strict partition with $\ell(\lambda) = k$.
    A \defn{strict decomposition tableau} (SDT) is 
    a tableau $T:D_\lambda \rightarrow \mathbb{N}$ with rows $R_1 \dots R_k$ so that:
    \begin{enumerate}[(a)]
        \item For all $i \in [k]$, the row $R_i$ is unimodal.
        \item For $i \in [k-1]$, the first and last entries of $R_{i+1}$ are less than the first entry of $R_i$.
        \item \label{sdt.c} For $i \in [k-1]$, consider the increasing sequences $\mathsf{T}(R_i) = a_1 \dots a_{\lambda_i}$ and $\mathsf{B}(R_{i+1}) = b_1 \dots b_{\lambda_{i+1}}$.
        For all $j \in [\lambda_{i+1}]$,
        \begin{equation}
        \label{eq:row-rule}
        \{ \pm a_1, \dots, \pm a_j,\pm b_{j+1},\dots,\pm b_{\lambda_{i+1}}\} \cap (b_j,a_{j+1}] = \varnothing.    
        \end{equation}
    \end{enumerate}
    Where $\mathsf{T}(R)$ and $\mathsf{B}(R)$ are defined as the following:
    \begin{align*}
\mathsf{T}(R) &:= -r_1 < \dots < -r_{j-1} < r_j < r_{j+1} < \dots < r_k,\ \text{or}\\
\mathsf{B}(R) &:= -r_1 < \dots < -r_{j-1} < -r_j < r_{j+1} \dots < r_k,
\end{align*}
    In Condition (c), note that $a_{j+1}$ appears immediately above $b_j$ in $T$. If $a_{j+1} \leq b_j$, then we will have that $(b_j,a_{j+1}] = \varnothing$, so Equation~\eqref{eq:row-rule} is satisfied vacuously.
If $x$ is an element of the set defined in the LHS of Equation~\eqref{eq:row-rule}, we say $x$ \defn{witnesses} $b_j < a_{j+1}$, hence the failure of Condition (\ref{sdt.c}).

\begin{figure}[h!]
\begin{ytableau}
        4 & 1 & 0 & 3\\
        \none & 2 & 3\\
\end{ytableau}
\begin{ytableau}
        4 & 0 & 1 & 3\\
        \none & 2 & 3\\
\end{ytableau}
    \begin{ytableau}
        4 & \textcolor{darkred}{1} & 0 & 3\\
        \none & \textcolor{darkred}{2} & \textcolor{darkred}{1}\\
\end{ytableau}
    \begin{ytableau}
        4 & \textcolor{darkred}{3} & 1 & \textcolor{darkred}{3}\\
        \none & 1 & 0 & \textcolor{darkred}{2}\\
\end{ytableau}
\caption{The two left tableau are examples of strict decomposition tableau while the right two tableau are non-examples (the offending entries colored in red).}
\end{figure}

The \defn{reading word} of an SDT, $T$, is the $0$-Hecke expression created by reading of entries of $T$ bottom to top, left to right, denoted $\rho(T)$.

\begin{figure}[h!]
\begin{ytableau}
        1 & 0 & 2
\end{ytableau}\;\;\;
\begin{ytableau}
        2 & 0\\
        \none & 1
\end{ytableau}\;\;\;
\begin{ytableau}
        2 & 0 & 2\\
        \none & 1
\end{ytableau}
\caption{These are all the SDT with reading words equivalent to $\overline{2}31$.}
\end{figure}

\begin{lemma}\label{lem:config}
    A shifted tableau with unimodal rows is a strict decomposition tableau if and only if the tableau avoids the following five configurations:
\[
(i)\;\begin{ytableau}
    a & \cdots & \\
    \none & \cdots & b
\end{ytableau},
(ii)\;\begin{ytableau}
    \; & \cdots & a & \cdots & \\
    \none & \cdots & c & \cdots & b
\end{ytableau},
(iii)\;\begin{ytableau}
    \; & \cdots & v & z & \cdots & \\
    \none & \cdots & \cdots & x & \cdots & y
\end{ytableau},\\
\]\[
(iv)\;\begin{ytableau}
    y & \cdots & z\\
    \none & \cdots & x
\end{ytableau},
(v)\;\begin{ytableau}
    \; & \cdots & y & \cdots & z\\
    \none & \cdots & \; & \cdots & x
\end{ytableau}
\]
with $a\leq b < c$, $x < y \leq z$, and $v < z$.
\end{lemma}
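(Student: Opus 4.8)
The plan is to treat conditions (b) and (c) one consecutive row pair $R_i,R_{i+1}$ at a time and to rephrase each in terms of the literal entries of $T$, using the two remarks recorded just after the definition of an SDT. Since $R_i$ is unimodal, $\mathsf{T}(R_i)$ is obtained from $R_i$ by negating exactly its strictly decreasing prefix, and $\mathsf{B}(R_{i+1})$ by negating the decreasing prefix of $R_{i+1}$ together with its minimum; hence the $\ell$th letter of $\mathsf{T}(R_i)$ is $\pm r_\ell$, with a sign depending only on where $\ell$ lies relative to the valley of $R_i$, and likewise for $\mathsf{B}(R_{i+1})$. Combined with the one-box shift between consecutive rows, this says that the box of $R_i$ carrying $a_{j+1}$ sits directly above the box of $R_{i+1}$ carrying $b_j$. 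The payoff is a geometric reformulation of (c): Equation~\eqref{eq:row-rule} fails at $j$ exactly when there is a box $L$ of $R_{i+1}$, the box $U$ of $R_i$ immediately above it, and a \emph{witness} box $E$ lying either in $R_i$ strictly left of $U$ or in $R_{i+1}$ strictly right of $L$, whose signed value lies in $(b_j,a_{j+1}]$ (forcing $b_j<a_{j+1}$). Condition (b) has a parallel reformulation: because $R_{i+1}$ is unimodal its maximum occurs at an endpoint, so ``some entry of $R_{i+1}$ is at least the first entry of $R_i$'' is exactly the negation of (b) --- this is what configuration~(i) records.

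With these reformulations in hand I would prove both implications by one finite case analysis. For ``SDT $\Rightarrow$ no forbidden configuration'', configuration~(i) is the failure of (b) just described, and for (ii)--(v) each picture specifies which box plays the role of $U$, which plays $L$, and which is the witness $E$; in every case $E$ is an extremal box of one of the two rows --- the first box of $R_i$ or the last box of $R_{i+1}$ --- read with the appropriate sign, and the displayed inequalities $a\le b<c$, $x<y\le z$, $v<z$ are exactly what places the signed value of $E$ inside $(b_j,a_{j+1}]$, while the auxiliary boxes (such as $v$ in (iii)) certify the sign of $a_{j+1}$. For the converse one runs the bookkeeping backwards: starting from a failure of (b), or of Equation~\eqref{eq:row-rule} at some $j$, split on whether $E$ lies in $R_i$ or in $R_{i+1}$ and on the signs of $a_{j+1}$, $b_j$ and the witness value (equivalently, on which side of its valley each of $U$, $L$, $E$ lies), and check that every resulting case exhibits one of the five pictures. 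The degenerate possibilities --- a row with no descent, $U$ being the valley box of $R_{i+1}$, a short row, the witness being an endpoint of its row --- are what make the list have five members rather than two.

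The delicate part, which I expect to be the main obstacle, is the sign bookkeeping near the valleys together with the reduction to extremal witnesses. Whether $a_{j+1}$ equals $r_{j+1}$ or $-r_{j+1}$ determines which comparison in Equation~\eqref{eq:row-rule} is binding, and in the negative case one must show the witness is then forced to be negative as well --- this is what collapses several a~priori cases onto the same picture; dually, when $E$ is the minimum of its own row, so that its letter is negated in one of $\mathsf{T}$, $\mathsf{B}$ but not the other, one must confirm its value still lands in $(b_j,a_{j+1}]$. One also needs the routine but essential observation that apart from $U$ and $L$ no box of $R_i\cup R_{i+1}$ occupies their common column, so that ``strictly left'' versus ``strictly right'' genuinely is the dichotomy the pictures encode. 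Most substantively, one must argue that whenever some witness exists, an extremal one --- the first box of $R_i$, or the last box of $R_{i+1}$, with the correct sign --- already works; this is the step that makes five patterns suffice, and verifying it uses the unimodality of both rows essentially. Once these points are settled the equivalence follows by inspecting the finitely many cases.
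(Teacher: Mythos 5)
First, note that the paper does not actually prove Lemma~\ref{lem:config}; it defers the proof to \cite{arroyo2025typeckstanleysymmetric}, so there is no in-paper argument to compare against, and your proposal has to stand on its own. Your translation of condition (c) is correct and matches the paper's remark: since the rows are unimodal, the $\ell$-th letter of $\mathsf{T}(R_i)$ (resp.\ $\mathsf{B}(R_{i+1})$) is the $\ell$-th box of the row up to sign, and the box carrying $a_{j+1}$ sits directly above the box carrying $b_j$, so a failure of \eqref{eq:row-rule} is the existence of a witness box left of that column in $R_i$ or right of it in $R_{i+1}$. The problem is that your sketch stops exactly where the lemma has content. The step you flag as the ``main obstacle'' --- that whenever some witness exists, an extremal one (first box of $R_i$ or last box of $R_{i+1}$) already works --- is asserted, not proved, and as a same-pair statement it is false. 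Take $R_i=(5,3,1,2,6)$ and $R_{i+1}=(2,1,0,4)$: then $\mathsf{T}(R_i)=(-5,-3,1,2,6)$ and $\mathsf{B}(R_{i+1})=(-2,-1,0,4)$, and at $j=2$ the interval is $(-1,1]$, whose only witness is the interior entry $0$ of $R_{i+1}$; neither $\pm 5$ (first box of $R_i$) nor $\pm 4$ (last box of $R_{i+1}$) lies in $(-1,1]$. This tableau does contain configuration (iv), but at a different vertical pair ($x=4$, $y=5$, $z=6$, i.e.\ the violation at $j=4$), so the implication ``some violation $\Rightarrow$ one of the five pictures'' is genuinely global: one must relocate the pattern (choose a better $j$, or argue from the interior witness that a configuration occurs elsewhere), not replace the witness for the same pair. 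That relocation argument is the heart of the lemma and is missing from the proposal.

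There are also smaller inaccuracies in the dictionary you set up. The set in \eqref{eq:row-rule} contains both signs $\pm$ of every admissible entry, so the sign of the \emph{witness} is irrelevant (your worry about a witness that is the minimum of its row is moot); the sign analysis that matters concerns $b_j$ and $a_{j+1}$ only, which is what the auxiliary boxes such as $v$ in (iii) control, as you note. Your claim that configuration (i) is exactly the negation of (b) is also too quick: reading (i) with $b$ the last entry of $R_{i+1}$, condition (b) can fail through the \emph{first} entry of $R_{i+1}$ with (i) absent (e.g.\ $R_i=(3,2,5)$, $R_{i+1}=(4,1)$, where the lemma is rescued by configuration (iv), not (i)). Likewise, in configuration (v) the witness $y$ is an interior box of $R_i$, contradicting your uniform ``extremal witness'' description of (ii)--(v). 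The overall plan --- rewrite (b) and (c) at the level of boxes and run a case analysis over positions relative to the valleys --- is the right kind of argument and is presumably close to the proof in \cite{arroyo2025typeckstanleysymmetric}, but as written your proposal has a genuine gap at its central step and several of its guiding claims need correction before the case analysis could be completed.
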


\noindent For the proof of Lemma~\ref{lem:config} see \cite{arroyo2025typeckstanleysymmetric}.

\subsection{GQ Functions}

For $\lambda$ a strict partition, $GQ_\lambda$ is a symmetric function defined in terms of shifted set-valued tableaux and represents $K$--theory classes for the Lagrangian Grassmannian~\cite{ikeda2013k}.
In \cite{arroyo2025typeckstanleysymmetric}, we proved that for $\lambda$ a strict partition with $\lambda_1 < n$ and $a \leq b$ positive integers,
\[
G^C_{w(\lambda,a,b,n)} = GQ_\lambda \cdot GQ_{\tau(a,b)}.
\]
Where $G^C_w$ is the Type $C$ $K$-Stanley for $w$ which were introduced in \cite{kirillov2017construction}.
Additionally, in \cite{arroyo2025typeckstanleysymmetric} SDT were the insertion objects of an insertion method that gives the following conjecture:
\begin{conjecture}
	\label{conj:expansion}
	For $w$ a signed permutation,
	\begin{equation}
	G^C_w = \sum_{\lambda \ \mathrm{strict}} \beta^{|\lambda| - \ell(w)}a^C_w(\lambda) \cdot GQ_\lambda.
\label{eq:GC-conj}		
	\end{equation}
\end{conjecture}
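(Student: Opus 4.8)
The plan is to prove Conjecture~\ref{conj:expansion} by upgrading the SDT insertion of \cite{arroyo2025typeckstanleysymmetric} to a weight-preserving bijection, following the template by which $K$-theoretic Stanley symmetric function identities are established from a Hecke-type insertion. First I would pin down the two sides combinatorially. For the left side I would use the defining generating function for $G^C_w$ from \cite{kirillov2017construction}, namely the sum over pairs $(h,\mathbf{i})$ with $h=(a_1,\dots,a_p)\in\HH(w)$ a type $C$ $0$-Hecke expression and $\mathbf{i}=(i_1\le\cdots\le i_p)$ a sequence compatible with $h$, weighted by $x_{i_1}\cdots x_{i_p}\,\beta^{p-\ell(w)}$. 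For the right side I would use that $GQ_\lambda$ is the generating function over shifted set-valued tableaux $Q$ of shape $\lambda$ (with the diagonal-primed convention that separates $GQ$ from $GP$), weighted by $x^{\mathrm{wt}(Q)}\,\beta^{\#Q-|\lambda|}$, where $\#Q$ counts entries with multiplicity.

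The heart of the argument is then to establish three properties of the insertion $(h,\mathbf{i})\mapsto(P,Q)$. (a) \emph{Reading-word invariance}: $P$ is an SDT with $\rho(P)\in\HH(w)$, i.e.\ each elementary bump alters the reading word only by a commutation, braid, long-braid, or idempotent move; Lemma~\ref{lem:config} is the natural tool, since it reduces ``$P$ is an SDT'' to tracking the five forbidden configurations through a single bump (parts of this may already be available from \cite{arroyo2025typeckstanleysymmetric}). (b) \emph{Recording data}: $Q$ is a shifted set-valued tableau of the same shape as $P$, recording in box order the cell(s) affected at each step of $\mathbf{i}$, so that $\mathrm{wt}(Q)$ is the content vector of $\mathbf{i}$ and $\#Q=p$; the $\beta$-exponents then match via $p-\ell(w)=(\#Q-|\mathrm{shape}(P)|)+(|\mathrm{shape}(P)|-\ell(w))$. (c) \emph{Bijectivity}: the map is a bijection onto the set of pairs $(P,Q)$ as in (a)--(b), with inverse a reverse insertion driven by $Q$. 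Granting (a)--(c), I would sum the generating function over all $(h,\mathbf{i})$, group the terms by $\mathrm{shape}(P)=\lambda$, and collapse the inner sum over recording tableaux $Q$ of shape $\lambda$ into $GQ_\lambda$, obtaining $G^C_w=\sum_{\lambda}\bigl|\{P\in\SDT:\mathrm{shape}(P)=\lambda,\ \rho(P)\in\HH(w)\}\bigr|\,\beta^{|\lambda|-\ell(w)}\,GQ_\lambda$, which is exactly \eqref{eq:GC-conj} with $a^C_w(\lambda)=\bigl|\{P\in\SDT:\mathrm{shape}(P)=\lambda,\ \rho(P)\in\HH(w)\}\bigr|$.

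The main obstacle is (c) together with the set-valued bookkeeping in (b): one must show reverse SDT insertion is well defined and inverts the forward map, and — the genuinely $K$-theoretic difficulty — pin down exactly which recording configurations (which multi-entry cells, which ``same-box'' bumps) can arise and prove that they range over \emph{arbitrary} shifted set-valued tableaux of the right shape with no hidden constraints, so that the inner sum really is $GQ_\lambda$ and not a restricted subfamily. This is precisely where Hecke-style insertions classically require a long case analysis of the bump rules, and I expect it to occupy the bulk of the proof. For base cases and consistency I would check that specializing $w$ to $w(\lambda,1,p)$ recovers Theorem~\ref{t:arroyoPieri} and $w$ to $w(\lambda,a,b,n)$ recovers Conjecture~\ref{conj:trapezoid}; an alternative route to the general case is to show both sides of \eqref{eq:GC-conj} satisfy the same $K$-theoretic transition recursion in $w$ and induct, but producing the SDT-side recursion appears to demand essentially the same analysis of the insertion.
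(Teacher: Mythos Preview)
The statement you are attempting to prove is Conjecture~\ref{conj:expansion}, and it remains a \emph{conjecture} in this paper: the paper does not contain a proof of it, so there is nothing to compare your proposal against. The paper's actual contribution is Theorem~\ref{t:arroyoPieri}, which verifies the special case $w=w(\lambda,1,p)$ by an entirely different method---not by analyzing the insertion, but by showing directly that $|\SDT(\mu,\lambda,p)|$ satisfies the Buch--Ravikumar recursion of Theorem~\ref{thm:buchravikumar} (Lemmas~\ref{tabsetup}--\ref{lem:colRecursion}). This is presented as \emph{evidence} for Conjectures~\ref{conj:trapezoid} and~\ref{conj:expansion}, not as a proof.

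Your outline is a sensible plan of attack on the open problem, and you have correctly identified the crux: step (c), the bijectivity of SDT insertion together with the claim that the recording tableaux range over \emph{all} shifted set-valued tableaux of the given shape, is exactly what is not established in \cite{arroyo2025typeckstanleysymmetric} and what makes \eqref{eq:GC-conj} a conjecture rather than a theorem. Be aware that your ``base case check'' of recovering Theorem~\ref{t:arroyoPieri} does not go in the direction you suggest---the paper derives the Pieri case independently of the insertion, precisely because the insertion-side argument (your (a)--(c)) is unavailable. So your proposal is not wrong as a strategy, but it is a proposal to resolve an open conjecture, and the hard step you flag is genuinely open.
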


This conjecture combined with the earlier result gives the following conjecture for expanding products of $GQ$'s:

\trapezoidConjecture*

When restricting to $a=1$, $\tau(1,b)$ is a rectangle meaning that conjecture \ref{conj:trapezoid} becomes a Pieri rule for $GQ$'s.
In \cite{buch2012pieri}, Buch and Ravikumar prove the following Pieri rule for $GQ$'s.

\begin{theorem}\label{thm:buchravikumar}
    \cite{buch2012pieri}
    For a strict partition $\lambda$ and $p\in\mathbb{Z}$,
    \begin{equation}
        GQ_{\lambda} \cdot GQ_p = \sum_{\mu} c^{\mu}_{\lambda,p} \beta^{|\mu|-|\lambda|-p} GQ_{\mu},
    \end{equation}
    with $c^{\mu}_{\lambda,p} = C(\theta,p)$ where $\theta = \mu \setminus \lambda$ be a shifted skew diagram and $C(\theta, p)$ is defined by the following:
    \begin{enumerate}[(a)]
        \item If $\theta$ is not a rim, then $C(\theta, p) = 0$. If $p \leq 0$, then $C(\theta, p) = \delta_{\theta, \emptyset}$. If $p > 0$, then $C(\emptyset, p) = 0$. For the rest of the rules assume $\theta \neq \emptyset$ is a rim and $p > 0$.
        \item If $\hat{\theta} = \emptyset$ and $\theta$ meets the diagonal, then
        \begin{equation*}\label{eqn:diagonal}
            C(\theta, p) = \begin{cases}
                \delta_{p, |\theta|} + \delta_{p, |\theta|-1} & \textrm{if $\theta$ is a column,}\\
                \delta_{p, |\theta|} & \textrm{if $\theta$ is a row.}
            \end{cases}
        \end{equation*}
        \item If $\hat{\theta} = \emptyset$ and $\theta$ is disjoint from the diagonal, then
        \begin{equation*}\label{eqn:notdiagonal}
            C(\theta, p) = 2\delta_{p,|\theta|} + \delta_{p, |\theta|-1}.
        \end{equation*}
        \item If $\hat{\theta}\neq \emptyset$ and $\theta \setminus \hat{\theta}$ is connected to $\hat{\theta}$ with $|\theta \setminus \hat{\theta}| = a$, then
        \begin{equation*}\label{eqn:connected}
            C(\theta, p) = C(\hat{\theta}, p-a) + C(\hat{\theta}, p-a+1).
        \end{equation*}
        \item If $\hat{\theta}\neq \emptyset$ and $\theta \setminus \hat{\theta}$ is not connected to $\hat{\theta}$ with $|\theta \setminus \hat{\theta}| = 1$, then
        \begin{equation*}\label{eqn:onebox}
            C(\theta, p) = 2 C(\hat{\theta}, p-1) + 2 C(\hat{\theta}, p).
        \end{equation*}
        \item If $\hat{\theta}\neq \emptyset$ and $\theta \setminus \hat{\theta}$ is not connected to $\hat{\theta}$ with $|\theta \setminus \hat{\theta}| = a > 1$, then
        \begin{equation*}\label{eqn:disconnected}
            C(\theta, p) = 2 C(\hat{\theta}, p-a) + 3 C(\hat{\theta}, p-a+1) + C(\hat{\theta}, p-a+2).
        \end{equation*}
    \end{enumerate}
\end{theorem}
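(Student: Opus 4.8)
The plan is to prove this the way $K$-theoretic Pieri rules for cominuscule Grassmannians are established geometrically. Realize $GQ_\lambda$ as the stable limit of the Schubert structure-sheaf class $[\mathcal{O}_{X_\lambda}]$ in the Grothendieck ring $K(LG)$ of the Lagrangian Grassmannian $LG = LG(n,2n)$, with the parameter $\beta$ recording codimension and specializing to $-1$ for ordinary $K$-theory; it then suffices to prove, for all sufficiently large $n$, the identity $[\mathcal{O}_{X_\lambda}]\cdot[\mathcal{O}_{X_p}] = \sum_{\mu}(-1)^{|\mu|-|\lambda|-p}\,c^{\mu}_{\lambda,p}\,[\mathcal{O}_{X_\mu}]$ and then to check that these coefficients stabilize in $n$. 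The degenerate inputs come first: by convention $GQ_p = 1$ when $p\le 0$, which gives part (a) for non-positive $p$; and when $\theta = \mu\setminus\lambda$ is not a rim the triple intersection below is empty, so $c^{\mu}_{\lambda,p} = 0$. From now on assume $p > 0$ and $\theta \ne \varnothing$ a rim.

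First I would rewrite the structure constant as a sheaf Euler characteristic. Via the duality pairing on $K(LG)$ between Schubert and opposite Schubert classes, $c^{\mu}_{\lambda,p}$ equals, up to the sign already recorded, $\chi\big(LG,\ \mathcal{O}_{X_\lambda}\otimes \mathcal{O}_{X_p}\otimes \mathcal{O}_{X^{\mu}}\big)$ for an opposite Schubert variety $X^{\mu}$ in general position. Since $X_\lambda\cap X^{\mu}$ is the Richardson variety $R$ --- reduced, irreducible, with rational singularities --- this equals $\chi\big(R,\ \mathcal{O}_{R\cap X_p}\big)$, where $X_p = \{\,V : \dim(V\cap F_p)\ge 1\,\}$ is a general translate of the special Schubert variety attached to an isotropic flag $F_\bullet$.

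Next I would build a geometric recursion on $\theta$ mirroring the combinatorial one that peels off the north-east arm $\hat{\theta}$. Projecting $R\cap X_p$ from the distinguished line $V\cap F_p$, or dually contracting along $V + F_p$, produces a morphism to a Richardson-type subvariety of a Lagrangian (or ordinary) Grassmannian of smaller rank; the crux is to prove this morphism has connected, rationally connected general fibers with rational singularities, so that all higher direct images vanish and $\chi$ drops to $\chi$ of the image. Tracking how the incidence condition $\dim(V\cap F_p)\ge 1$ transforms under the projection converts arm removal into the index shift $p\mapsto p - a$, with the $1{+}1$, $2{+}2$, or $2{+}3{+}1$ multiplicity pattern of parts (d)--(f) dictated by whether $\theta\setminus\hat{\theta}$ is attached to $\hat{\theta}$ and by its size: the multiplicities are exactly the numbers of irreducible components that appear when $X_p$ stops meeting $R$ transversally, or when a detached box forces a free choice of the line $V\cap F_p$. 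The base cases $\hat{\theta} = \varnothing$ --- where $\theta$ is a single row or column --- are explicit Euler-characteristic computations on a Richardson variety in $LG(k,2k)$, on a projective space $\P^{p-1}$, or on an off-diagonal analogue, giving the $\delta$-function answers of parts (b)--(c); the extra $\delta_{p,|\theta|-1}$ in the column-meets-diagonal case comes from the single additional component created where the Richardson variety touches the fixed isotropic subspace, and the row/column asymmetry and the absence of a factor $2$ in the on-diagonal cases reflect the extra rigidity imposed when the rim piece is forced against that subspace.

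The main obstacle is precisely this cohomology-vanishing and component-counting bookkeeping inside the recursion: one must verify, case by case, that $R\cap X_p$ and its images under the projections have at worst rational singularities and rationally connected general fibers, and --- the delicate point --- correctly enumerate their irreducible components, since it is the jump from one component to two or three (as $X_p$ degenerates from a transversal to a tangential intersection with $R$, or as a disconnected box of size one versus size $>1$ changes how much of the line is pinned down) that separates parts (d), (e) and (f) and produces the row/column asymmetry of part (b). A fully combinatorial alternative would instead expand $GQ_\lambda$ and $GQ_p$ over Ikeda--Naruse shifted set-valued tableaux and construct a shifted Hecke insertion inserting the single-row tableau, proving the rule by a weight-preserving bijection on the recording data; that route trades the singularity analysis for the equally delicate task of showing the insertion is well defined, shape-determining in reverse, and that its recording objects are counted by $C(\theta,p)$.
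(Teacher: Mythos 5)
First, a point of orientation: the paper you are commenting on does not prove this theorem at all --- it is quoted verbatim from Buch and Ravikumar \cite{buch2012pieri} and used as a black box. The paper's own work (Lemmas~\ref{tabsetup}--\ref{lem:colRecursion}) shows that a different quantity, $|\SDT(\mu,\lambda,p)|$, satisfies the same recursion $C(\theta,p)$, and the comparison is made at the level of the recursion, not at the level of geometry. So there is no internal proof to match your argument against; the relevant comparison is with the original Buch--Ravikumar proof.

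Measured against that, your sketch correctly identifies the coarse framework of the actual proof: realize the structure constants as sheaf Euler characteristics of triple intersections of Schubert varieties in $LG(n,2n)$, and use rationality/rational singularities of (projected) Richardson varieties to make higher cohomology vanish. But the heart of the theorem is the explicit recursion --- the coefficients $1{+}1$, $2{+}2$, and $2{+}3{+}1$ in parts (d)--(f) and the $\delta$-function base cases in (b)--(c) --- and your proposal does not derive any of these. You attribute the multiplicities to ``the numbers of irreducible components that appear when $X_p$ stops meeting $R$ transversally,'' but this is an unsubstantiated guess and does not reflect how the coefficients actually arise: in Buch--Ravikumar the geometry only establishes that each relevant nonempty intersection contributes exactly $1$ to an Euler characteristic, and the recursion with its specific multiplicities is then obtained by a purely combinatorial enumeration of labelings of the rim $\theta$ (peeling off the north-east arm), together with a M\"obius-type inversion to pass from the dual basis to the structure-sheaf basis. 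Nothing in your sketch produces, or even constrains, the values $2$, $3$, $1$, the asymmetry between rows and columns on the diagonal, or the distinction between $|\theta\setminus\hat\theta|=1$ and $|\theta\setminus\hat\theta|>1$; the paragraph where these should be proved is instead labeled ``the main obstacle.'' As it stands the proposal is a plausible program, not a proof, and the part that is missing is precisely the content of the theorem.
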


The next section proves that when Conjecture \ref{conj:trapezoid} is restricted to $a=1$ it agrees with Theorem \ref{thm:buchravikumar}.

\section{Proof}

\begin{definition}
    Let $\textrm{SDT}(\mu, \lambda, p)$ be the set of strict decomposition tableaux of shape $\mu$ with reading word equivalent to $w(\lambda,1,p)$.
\end{definition}

\begin{definition}
    For tableau in the set $\textrm{SDT}(\mu, \lambda, p)$ we distinguish two different kinds of numbers that fill the tableau.
    A \defn{large number} is any number greater than $\lambda_1$.
    A \defn{small number} is any $\lambda_i - 1$ not in row $i$.
    A small number can be \defn{converted} into a large number to give a different tableau by becoming the smallest large number and then increasing all the other large numbers.
    Note that when examples of tableaux are given later in this section large numbers will be colored green while small numbers will be colored blue.
\end{definition}

We will now proceed to show that $|\textrm{SDT}(\mu,\lambda,p)|$ follows the same recursion as in Theorem~\ref{thm:buchravikumar}.
As an outline to the argument we will show that each row above the bottom row of the top most column is fixed and everything below that row must be an SDT with skew shape missing the northwest arm.
Then the amount of ways to create the distinguished row exactly follows the recursion in Theorem~\ref{thm:buchravikumar}.

\begin{lemma}\label{tabsetup}
For any $\textrm{SDT}(\mu, \lambda, p)$, the elements $\lambda_k-2,\ldots, 0$ must be in row $k$ consecutively.
\end{lemma}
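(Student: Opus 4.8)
\emph{Strategy.} The plan is to reduce the assertion to a statement about $0$-Hecke words for $w(\lambda,1,p)$ and then pin down the positions of the letters $0,1,\dots,\lambda_k-2$ using the SDT axioms, principally Lemma~\ref{lem:config}.

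\emph{Alphabet.} Fix $T\in\SDT(\mu,\lambda,p)$, so that $\rho(T)$ is a $0$-Hecke expression for $w(\lambda,1,p)$; then the set of letters occurring in $\rho(T)$ is exactly the support of $w(\lambda,1,p)$, which by the braid-free reduced word recalled in the previous section equals $\{0,1,\dots,\lambda_1-1\}\cup\{\lambda_1+1,\dots,\lambda_1+p\}$. In particular $\lambda_1$ never occurs, and each of $0,1,\dots,\lambda_k-2$ occurs at least once; if $\lambda_k\le 1$ the claimed run is empty and there is nothing to prove, so assume $\lambda_k\ge 2$. Since the letters $\lambda_1+1,\dots,\lambda_1+p$ commute with every letter $\le\lambda_1-1$ while $\lambda_1$ is absent, deleting the ``large'' letters from $\rho(T)$ leaves a $0$-Hecke expression for the Grassmannian permutation $w(\lambda,\lambda_1)$; combined with condition~(b) and Lemma~\ref{lem:config}, this keeps every large entry of $T$ to the northeast of the diagram, away from the small entries near row $R_k$.

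\emph{Forcing the run.} The core step is to prove, by induction on $j=0,1,\dots,\lambda_k-2$, that the values $0,1,\dots,j$ appear in $R_k$, appear in no row above $R_k$, and occupy a contiguous block of boxes at the right-hand end of $R_k$. In the inductive step $j$ occurs somewhere in $\rho(T)$, and by the inductive hypothesis no entry of a row above $R_k$ is less than $j$; so if a copy of $j$ lay in a row $R_i$ with $i<k$ it would be the minimum entry of $R_i$, and hence would sit above some entry of $R_{i+1}$. Then unimodality of $R_i$ together with the strictly decreasing diagonal entries (condition~(b)) would force one of the forbidden configurations of Lemma~\ref{lem:config}, a contradiction. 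A parallel analysis rules a copy of $j$ out of the rows below $R_k$: since $\mu\setminus\lambda$ is a rim, at most two rows appear below $R_k$, and the Pieri rule of Theorem~\ref{thm:buchravikumar} forces these to be short, with all their entries bounded by $T(k,k)$ via configuration~$(i)$ and further constrained by condition~(b). Hence $j\in R_k$; and since $R_k$ is unimodal and the boxes holding $0,\dots,j-1$ already form a right-hand block ending in $0$, the value $j>j-1$ extends this block one box to the left. Taking $j=\lambda_k-2$ gives the lemma. (When $k=1$ there are no rows above $R_1$, and the contiguity at the right-hand end of $R_1$ instead follows from unimodality together with the interleaving structure of the Hecke words of $w(\lambda,1,p)$.)

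\emph{Main obstacle.} The step I expect to be hardest is excluding a small letter from the rows above $R_k$. Condition~(b) alone is insufficient, since a small letter need not be a diagonal entry; one must extract from Lemma~\ref{lem:config} that the minimum of a row cannot migrate upward, and the delicate sub-case is a ``large'' entry sitting on the diagonal of a higher row directly above a strictly smaller entry --- exactly what configurations~$(iv)$ and~$(v)$ forbid --- together with the analogous possibility of a large entry wedged between $T(k,k)$ and the run inside $R_k$ itself. The secondary obstacle is the book-keeping for the at most two rows that can appear below $R_k$, where the explicit form of $w(\lambda,1,p)$ and the Pieri rule are needed to see that these rows cannot carry any element of the run.
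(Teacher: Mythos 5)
Your induction cannot get off the ground because its hypothesis is false for genuine elements of $\SDT(\mu,\lambda,p)$. In the Hecke word for $w(\lambda,1,p)$ each letter $j\le\lambda_k-2$ occurs once for \emph{every} part $\lambda_i>j$ (one descending run per part), so the values $0,1,\dots,j$ certainly do appear in rows above $R_k$: row $1$ alone must carry $\lambda_1-2,\dots,0$. Hence the assertion ``no entry of a row above $R_k$ is less than $j$'' is simply untrue, and with it the step that a copy of $j$ in a higher row would be that row's minimum. The lemma is really a statement about \emph{which copy} of each small letter lands in which row, and it is needed for every row $i$ (later lemmas use ``row $i$ contains $\lambda_i-2,\dots,0$''), not only for the bottom row. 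The paper's proof therefore tracks the occurrence contributed by each part separately: the $0$ coming from part $\lambda_i$ cannot sit above row $i$, since then the $0$'s from $\lambda_{i-1},\dots,\lambda_1$ would cascade upward out of the tableau (two $0$'s cannot share a row), and the $\lambda_i-2$ coming from part $\lambda_i$ cannot sit below row $i$, by a minimal-counterexample analysis with the configurations of Lemma~\ref{lem:config} and the non-commutation of large numbers; the intermediate letters are then trapped between $\lambda_i-2$ and $0$. Nothing in your proposal supplies either of these two directed arguments. Your stronger claim that the run occupies ``a contiguous block at the right-hand end of $R_k$'' is also false (valid SDTs contain rows such as $5\,1\,0\,7\,8$, where the run $1,0$ sits left of the large entries), so the ``extend one box to the left'' step fails; the lemma asserts only consecutiveness.

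The treatment of rows below $R_k$ is likewise not admissible. At this point in the paper you may not assume $\mu\setminus\lambda$ is a rim: that is Lemma~\ref{notrim}, which is proved \emph{after} and \emph{from} Lemma~\ref{tabsetup}. More seriously, Theorem~\ref{thm:buchravikumar} is a statement about the coefficients $c^{\mu}_{\lambda,p}$ and says nothing about the entries of any individual tableau; since the goal of the paper is precisely to show that $|\SDT(\mu,\lambda,p)|$ satisfies the same recursion, invoking the Pieri rule to constrain an SDT here is circular. Similarly, the claim that deleting large letters ``keeps every large entry to the northeast, away from the small entries'' is unproved at this stage (the placement of large/small numbers is only established in the subsequent lemmas, which themselves rely on Lemma~\ref{tabsetup}). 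To repair the argument you would need to replace these appeals with direct configuration-avoidance reasoning as in the paper.
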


\begin{proof}
    Note each $\lambda_k$ contributes $\lambda_k-1, \ldots, 0$ to the reading word.
    Suppose the $0$ from $\lambda_k$'s part appears above row $k$, then $0$ from $\lambda_{k-1}$ must be above row $k-1$ as two $0$s cannot be in the same row.
    We can repeat this until we require that that the $0$ from $\lambda_1$'s part is above row $1$, which is impossible.\\
    Suppose the $\lambda_k-2$ from $\lambda_k$ appears in row $\ell$ where $\ell > k$ and suppose $k$ is the smallest value for which this is true.
    Note that $\lambda_k-2$ cannot commute past $\lambda_k-1$, thus $\lambda_k-1$ must also appear in row $\ell$ or a lower row, further if $\lambda_k-1$ is in row $\ell$ it must be left of $\lambda_k-2$.
    Therefore, row $\ell$ must start with an element greater than or equal to $\lambda_k-1$ to avoid configuration $(i)$ if $\lambda_k-1$ is below row $\ell$ or as $\lambda_k-1$ is part of the decreasing portion of the row.
    Further, the element above the start of row $\ell$ must be in the greater than or equal to $\lambda_k-1$ in order to avoid configuration $(ii)$.
    Additionally, the start of row $\ell-1$ must be greater than the element to its right as otherwise the tableau forms configuration $(iv)$.
    As no copy of $\lambda_k-1$ can commute past the $\lambda_k-2$ in row $\ell$ and by assumption $k$ is smallest such that $\lambda_k-2$ appears below its row, we have that the element above the start of row $\ell$ is either a large number or $\lambda_i-1$ for some $i > k$.
    However, in the former case the front of row $\ell-1$ would have to be a bigger large number which cannot happen as large numbers cannot commute.
    In the latter case, the existence of $\lambda_i-1$ in row $\ell-1 \leq k < i$ implies there is at least one row above row $\ell-1$ in the tableau.
    Further, as $\lambda_j-1$s cannot commute past each other (due to the other terms $\lambda_j$ contributes), we have that the front row $\ell-1$ must be a large number.
    However, this then requires that the front of row $\ell-2$ is two large numbers in decreasing order otherwise the tableau would form configurations $(i)$ or $(iv)$, by similar arguments for row $\ell-1$.
    This is a contradiction as large numbers must be in increasing in a row as they cannot commute past each other.
    Therefore, the $\lambda_k-2$ that $\lambda_k$ contributes must be in row $k$.
    As the $\lambda_k-2$ from $\lambda_k$ cannot be moved below row $k$ and the $0$ from $\lambda_k$ cannot be moved above row $k$ and $\lambda_k-3, \ldots, 1$ cannot commute past $\lambda_k-2$ and $0$ all of $\lambda_k-2,\ldots, 0$ must appear in row $k$.
    Further, as the row is unimodal they must all be adjacent.
\end{proof}

\begin{lemma}\label{lem:large_inc}
    Large/small numbers must appear in weakly increasing order from bottom to top, left to right.
\end{lemma}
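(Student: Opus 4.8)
The plan is to treat large numbers and small numbers separately: large numbers can be controlled entirely by $0$-Hecke bookkeeping, whereas small numbers require the forbidden configurations of Lemma~\ref{lem:config}. Write $w=w(\lambda,1,p)$ and take $n=\lambda_1$, so that $\rho(T)$ is a Hecke word for $w$.

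First a normalization step. No entry of $T$ equals $\lambda_1$: in $w$ the positions $\lambda_1$ and $\lambda_1{+}1$ carry the values $i_{\lambda_1-k}\le\lambda_1{-}1$ and $\lambda_1{+}2$, an ascent, so $s_{\lambda_1}$ is not in the support of $w$ and hence occurs in no $0$-Hecke expression for $w$. Thus every entry of $T$ lies either in the small region $\{0,\dots,\lambda_1{-}1\}$ (which contains every small number, each being some $\lambda_i{-}1$) or is a large number $\ge\lambda_1{+}1$, and any large-region generator commutes with any small-region generator (the indices differ by at least $2$). Moreover $w$ itself splits as $w=u\circ v$, where $u$ is supported on $s_0,\dots,s_{\lambda_1-1}$ (it is the Grassmannian permutation attached to $\lambda$) and $v=s_{\lambda_1+1}\circ\cdots\circ s_{\lambda_1+p}$ is supported on the disjoint, non-adjacent block of large generators; the two factors commute, so in $\rho(T)$ the subword of large entries is a Hecke word for $v$ and the subword of small-region entries is a Hecke word for $u$.

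For the large numbers it now suffices to show that every Hecke word for $v$ is weakly increasing. I would delete from such a word every redundant (non-swapping) letter; this leaves the $0$-Hecke product unchanged and produces a reduced word, which must be the unique reduced word $(\lambda_1{+}1,\dots,\lambda_1{+}p)$ of $v$ (this word is braid-free and has no adjacent commuting pair, so it is the only one). Running that reduced word on the identity, the running permutation after the prefix ending in $\lambda_1{+}t$ has its only descent at position $\lambda_1{+}t$, so each deleted letter can only be reinserted there, i.e.\ as another copy of $\lambda_1{+}t$. Hence every Hecke word for $v$ has the shape $\big((\lambda_1{+}1)^{b_1},\dots,(\lambda_1{+}p)^{b_p}\big)$ with $b_i\ge1$, and in particular is weakly increasing; since each row of $T$ is read as one contiguous block of $\rho(T)$, the large entries of $T$ appear in weakly increasing order, as claimed.

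For the small numbers commutation is genuinely unavailable — for $i\ge2$ the generator $s_{\lambda_i-1}$ does not commute with $s_{\lambda_i}$, and $s_{\lambda_i}$ does occur in $w$ — so I would argue inside the tableau. Suppose toward a contradiction that $x=\lambda_a{-}1$ and $y=\lambda_b{-}1$ are small numbers with $x>y$ but $x$ is read before $y$ in $\rho(T)$. If $x$ and $y$ lie in the same row, unimodality forces $y$ to sit at or past the peak, and then condition~\eqref{eq:row-rule} applied to the box above $y$ and to the start of the next row up produces a witness against $y$ — the mechanism already used in Lemma~\ref{tabsetup} to keep $\lambda_k{-}2$ inside row $k$. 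If $x\in R_j$ and $y\in R_{j'}$ with $j>j'$, I would propagate a forbidden configuration upward from $R_j$ to $R_{j'}$: $x$ cannot slide left past the descending ordinary entries it is wedged among, which pins the relevant two-row windows, and combining this with condition~(b) on first entries of consecutive rows and with Lemma~\ref{tabsetup} forces one of the configurations $(i)$--$(iii)$ of Lemma~\ref{lem:config}. I expect this different-row case to be the main obstacle: it is exactly where the home-row index of a displaced $\lambda_i{-}1$ must be reconciled with the row it actually occupies, and it needs the same careful ``carry a bad configuration up the tableau'' bookkeeping as the proof of Lemma~\ref{tabsetup}, now run with two displaced entries at once.
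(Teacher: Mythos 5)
Your proposal has a genuine gap: it never addresses the mixed case, where a \emph{large} number is read before a \emph{small} number (either earlier in the same row or in a lower row). The lemma asserts weak increase of the large/small numbers \emph{jointly} in reading order, and since every small number is some $\lambda_i-1<\lambda_1$ while every large number exceeds $\lambda_1$, a large entry preceding a small one is a violation that must be ruled out. Your own normalization shows why your decomposition cannot see this: the large-block generators commute with all small-region generators, so the Hecke word for $w(\lambda,1,p)$ imposes \emph{no} constraint on the relative order of large versus small letters; the obstruction is purely tableau-theoretic. This is exactly where the paper spends two of its four paragraphs (large before small in the same row forces the start of the row above to be large and produces configuration $(ii)$ of Lemma~\ref{lem:config} via Lemma~\ref{tabsetup}; large below small reduces to that case via configuration $(i)$), and the conclusion is used heavily downstream (e.g.\ forcing a single large/small entry at the front of each fixed row). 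Without it the lemma is not proved.

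Two further points. First, your small-versus-small argument is only a sketch: the same-row case is roughly the paper's argument (minimal row index, leftmost pair, analyze the second entry of the row above, land in configuration $(ii)$), but the different-row case is explicitly deferred (``I would propagate \dots I expect this \dots to be the main obstacle''), whereas the paper disposes of it quickly by noting that avoiding configuration $(i)$ forces the first entry of the intermediate row to exceed $s$, hence to be a large number or a larger small number, reducing to cases already handled. Second, a minor flaw in the normalization: an ascent of $w$ at position $\lambda_1$ does not by itself imply that $s_{\lambda_1}$ is absent from the support (e.g.\ $s_2s_1$ has an ascent at position $2$ but $s_2$ in its support); the correct justification is the explicit reduced word for $w(\lambda,1,p)$ given in the paper, which omits the letter $\lambda_1$, together with the fact that the letters of any Hecke word lie in the support. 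Your treatment of the large numbers alone is fine and in fact more detailed than the paper's one-line commutation remark, but as it stands the proposal proves only the large--large and (partially) small--small comparisons, not the full statement.
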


\begin{proof}
    Large numbers must appear in weakly increasing order as large numbers cannot commute past in other in the $0$-Hecke monoid.
    
    Suppose a large number appears before a small number on the same row.
    Let $i$ be the smallest index of such a row, note that $i>1$ as small numbers cannot appear in the first row as that would require them to commute past copies of themselves minus $1$ due to Lemma~\ref{tabsetup}.
    Note that to avoid configuration ($i$), the first element of row $i-1$ must be a large number.
    Additionally, the large number in row $i$ must be the first element of the row as the large number appears before the small number and the row is unimodal.
    The second element in row $i-1$ cannot be a small number otherwise, $i$ wouldn't be the smallest row index with a large number followed by a small number.
    Further, the second element in row $i-1$ cannot be a large number by Lemma~\ref{tabsetup}.
    Therefore, the tableau has an instance of configuration ($ii$) with $a$ being the second element in row $i-1$, $b$ being the small number in row $i$, and $c$ being the large number in row $i$.

    Suppose a large number appears below a small number.
    Let the large number be in row $i$ and the small number in row $j$.
    Then in order to have the tableau avoid configuration ($i$) we must have the first element in each row up to row $j$ be a large number.
    However, we are then in the previous case.
    
    Suppose a small number $s$ appears before another small number $r$ with $s>r$.
    First suppose both appear in the same row.
    Note that small numbers cannot appear in the first row as that would require them to commute past copies of themselves minus $1$ due to Lemma~\ref{tabsetup}.
    Let $i$ be the smallest index with such an $s$ and $r$ and take the leftmost instance of such a pair.
    Note that $s$ must be the first element of row $i$ in order for the row to be unimodal and to not be in one of the previous cases.
    The second element of row $i-1$ cannot be a large number as that would require the first entry to be a bigger large number, which then requires the entire row to be a large number contradicting Lemma~\ref{tabsetup}.
    Further, the second element of row $i-1$ cannot be a small number as then the first element of row $i-1$ would have to be a bigger small number, contradicting that $i$ is the smallest index, or be a large number, which is dealt with in a previous case.
    Therefore, the element above $s$ must be smaller than $s$ and $r$, meaning the tableau forms configuration ($ii$) with $a$ being the second element in row $i-1$, $b=r$, and $c=s$.
    Next, suppose $r$ appears above $s$, with $r$ in row $i$.
    Then in order to avoid configuration ($i$) we must have the first entry of row $i$ be larger than $s$.
    However, such an entry must either be a larger number or a bigger small number than $r$ meaning we are in a previous case.
\end{proof}

\begin{collary}
    Any $T\in\textrm{SDT}(\mu, \lambda, p)$ is completely determined by the location of large/small numbers and which ones are duplicates.
\end{collary}

\begin{lemma}\label{lem:toprows}
    If $\mu\neq \lambda$ and $\mu_i = \lambda_i$, then row $i$ of any $T\in\textrm{SDT}(\mu, \lambda, p)$ has exactly one large/small number which appears at the start of the row.
\end{lemma}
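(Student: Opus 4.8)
The plan is to determine $R_i$ up to one cell and then pin that cell down. The bulk of the row is controlled by Lemma~\ref{tabsetup} and the ideas in its proof: the ``zero cascade'' forces the $0$ contributed by each part of $\lambda$ into its designated row, the entries $\lambda_i-2,\dots,0$ that part $\lambda_i$ contributes cannot slip below their row (the argument of Lemma~\ref{tabsetup}) nor above it (they precede the pinned $0$ in every reading word and cannot be reordered past it), and a non-maximal $\lambda_j$-letter cannot migrate at all. So $\lambda_i-2,\dots,0$ fill $\lambda_i-1$ consecutive cells of $R_i$, and since $R_i$ is valley-unimodal these are exactly its decreasing prefix. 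Because $\mu_i=\lambda_i$, the row $R_i$ has exactly $\lambda_i$ cells, so after those $\lambda_i-1$ cells there is a single free cell: it is either prepended as a new first entry $v\ge\lambda_i-1$, or it sits in an increasing tail of length one after the final $0$.

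The main step is to exclude every possibility for the free cell except ``a large/small number at the start,'' and here the hypothesis $\mu\neq\lambda$ enters. Since $p\ge 1$ a large number occurs in $T$; being larger than every $\lambda$-entry it is the maximum of whatever row it lies in, hence by valley-unimodality lies at one of its ends, and condition~(b) (equivalently configuration $(i)$ of Lemma~\ref{lem:config}) forbids a large number at the right end of a row unless the row above begins with a still-larger entry, a chain of inequalities that must terminate. Combined with Lemma~\ref{lem:large_inc}, this pushes large numbers to the left ends of rows and toward the top of $T$. Using this I would argue: (a) the free cell cannot be a repeated $\lambda_i$-letter or the value $\lambda_i-1$, since then $R_i$ would be the rigid row $(\lambda_i-1,\dots,0)$, forcing every row at or below $i$ to have maximum $\le\lambda_1-1$ and leaving nowhere for the large numbers demanded by the boxes of $\mu\setminus\lambda$ lying above the distinguished row (the bottom row of the topmost column of $\mu$); and (b) the free cell cannot lie in the increasing tail, since a large or small entry there forces the first entry of $R_{i-1}$ upward, and propagating this upward over-counts the cells of the first row above at which $\mu$ agrees with $\lambda$. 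Hence the free cell is prepended, and a cell of $R_i$ whose value exceeds $\lambda_i-1$ can only be a large number ($>\lambda_1$) or a displaced maximal $\lambda_j$-letter $\lambda_j-1$ with $j<i$, i.e.\ a small number. Since the free cell is unique, this is the unique large/small number of $R_i$, and it sits at the start.

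The step I expect to be the main obstacle is (a), excluding the rigid row $R_i=(\lambda_i-1,\dots,0)$: this is the only place the global hypothesis $\mu\neq\lambda$ is really needed, and using it cleanly requires tracing a large number from a box of $\mu\setminus\lambda$ above the distinguished row down toward row $i$ and extracting one of the forbidden configurations of Lemma~\ref{lem:config}, or a pair of strictly decreasing large numbers exactly as in the proof of Lemma~\ref{tabsetup}. Bookkeeping is delicate because, by the Corollary, $T$ is recovered only from the positions and multiplicities of its large/small numbers, so the argument has to be phrased in terms of the topmost or leftmost large number rather than counting occurrences.
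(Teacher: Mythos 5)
Your first half matches the paper: by Lemma~\ref{tabsetup} row $i$ must contain $\lambda_i-2,\dots,0$ consecutively, and $\mu_i=\lambda_i$ leaves exactly one free cell, either prepended at the front or sitting in the increasing tail. The gap is in how you force that cell to be a large/small number at the front. The paper draws the forcing from \emph{below}: it uses $\mu\neq\lambda$ to produce a row $j$ below row $i$ with $\mu_j>\lambda_j$ (an assertion not justified by the stated hypotheses, but true wherever the lemma is applied, since there row $i$ lies above the north-east arm); that longer row must hold a large/small entry, and since the maximum of a unimodal row sits at one of its ends, condition (b) and configuration $(i)$ force the first entry of every row from $j$ up to $i$ to exceed it --- so the single free cell of row $i$ is a large/small number and it sits at the front. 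You instead try to extract a contradiction from boxes of $\mu\setminus\lambda$ lying \emph{above} the distinguished row, supported by the claim that configuration $(i)$ together with Lemma~\ref{lem:large_inc} ``pushes large numbers to the left ends of rows and toward the top of $T$.'' That auxiliary claim is false --- large numbers routinely occupy the right end of a row with a larger entry heading the row above (see the examples after Lemmas~\ref{lem:rowDiag} and~\ref{lem:rowNotDiag}) --- and skew boxes above row $i$ exert no pressure on row $i$ at all, since every large number they demand can be housed above row $i$. So neither your step (a) nor your step (b) produces a contradiction as written.

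Moreover, your route cannot be repaired, because when all of $\mu\setminus\lambda$ lies above row $i$ the conclusion itself fails. Take $\lambda=(3,2)$, $\mu=(4,2)$, $p=1$: the tableau with rows $(4,2,1,0)$ and $(1,0)$ is an SDT (conditions (a) and (b) are immediate, and condition (c) holds vacuously) whose reading word $(1,0,4,2,1,0)$ becomes, after commuting the $4$ to the end, the reduced word $(1,0,2,1,0,4)$ of $w(\lambda,1,1)$; yet its second row contains no large or small number even though $\mu\neq\lambda$ and $\mu_2=\lambda_2$. Hence the hypothesis $\mu\neq\lambda$ must be used exactly as the paper uses it, by locating a longer row below row $i$ and propagating the resulting large/small entry upward via condition (b)/configuration $(i)$; the same mechanism (the entry at an end of the row directly below exceeds every value in $\{\lambda_i-2,\dots,0\}$, so the first entry of row $i$ must be the big one) is what rules out placing the large/small entry in the increasing tail of row $i$ --- not the ``over-count'' of cells in rows above that your step (b) appeals to.
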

\begin{proof}
    By Lemma~\ref{tabsetup} the row needs to contain $\lambda_i-2,\ldots, 0$ that leaves only one free entry.
    This entry must be a large/small number as below row $i$ there is a row $j$ with $\mu_j > \lambda_j$ which requires the row to contain a large/small number which then requires row $i$ to also contain a large/small number in the front in order to avoid configuration ($i$).
\end{proof}

\begin{lemma}\label{notrim}
    If $\mu / \lambda$ is not a rim, then $\textrm{SDT}(\mu, \lambda, p)$ is $\emptyset$ for all $p\in \mathbb{Z}^+$.
\end{lemma}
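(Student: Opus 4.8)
Suppose toward a contradiction that $T\in\textrm{SDT}(\mu,\lambda,p)$ for some $p\in\mathbb{Z}^+$ while $\mu/\lambda$ is not a rim; the plan is to locate one of the forbidden configurations of Lemma~\ref{lem:config}. Since $\mu/\lambda$ is not a rim it contains a $2\times 2$ block of cells; fix one occupying rows $i,i+1$ and columns $j,j+1$, chosen with $i+1$ as large as possible and then with $j$ as large as possible. First I would record the geometry this forces. If $(i+1,j+2)\in D_\mu$ then $(i,j+2)\in D_\mu$ as well (shifted shape), and since $(i,j+1),(i+1,j+1)\notin D_\lambda$ the four cells in rows $i,i+1$ and columns $j+1,j+2$ form a $2\times 2$ block outside $D_\lambda$ farther to the east, contradicting maximality of $j$; hence row $i+1$ of $\mu$ ends in column $j+1$. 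From $(i,j)\notin D_\lambda$ we get $\lambda_i\le j-i$, and $(i+1,j+1)\in D_\mu$ gives $\mu_{i+1}\ge j-i+1\ge\lambda_i+1\ge\lambda_{i+1}+2$, so that $\mu_i\ge\mu_{i+1}+1\ge\lambda_i+2$. Thus both rows $i$ and $i+1$ extend at least two columns past $D_\lambda$.

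Next I would describe rows $i,i+1$ near the block using the structure theorems. By Lemma~\ref{tabsetup}, for $m\le k$ the forced block $\lambda_m-2,\dots,0$ occupies a consecutive run of row $m$, which (rows being unimodal) sits at the bottom of row $m$'s strictly decreasing initial part; besides that block, this decreasing part can contain only the single value $\lambda_m-1$ and at most one large/small entry (two large/small entries there would have to strictly decrease, contradicting Lemma~\ref{lem:large_inc}), so the decreasing part of row $m$ reaches at most column $m+\lambda_m$. Consequently, in the part of each of rows $i,i+1$ lying past $D_\lambda$ at most one cell comes from the decreasing part and the rest lie in the weakly increasing tail, hence are large/small entries; by Lemma~\ref{lem:large_inc} all large/small entries of $T$ increase weakly along the reading word, and by Lemma~\ref{lem:toprows} any intervening row whose length equals its $\lambda$-part carries a single large/small entry at its front. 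From these facts one extracts that the final entry of row $i+1$, i.e.\ the cell $(i+1,j+1)$, is a large/small entry weakly dominating every large/small entry of row $i+1$, whereas the first entry of row $i$ is at most $\lambda_1$ unless it is itself a large number.

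The contradiction is then drawn by a case analysis on whether row $i$ also ends in column $j+1$ (equivalently $\mu_i=\mu_{i+1}+1$) and on the first entry of row $i$: when $\mu_i=\mu_{i+1}+1$, the first entry of row $i$ sitting over the last entry of row $i+1$ matches configuration $(i)$, $(iv)$, or $(v)$; when row $i$ runs past column $j+1$, the entries of row $i$ in columns $j,j+1$, the entry of row $i$ immediately left of column $j$, and the entries of row $i+1$ in columns $j,j+1$ together match configuration $(ii)$ or $(iii)$ once the strict inequalities $a\le b<c$, $x<y\le z$, $v<z$ of Lemma~\ref{lem:config} are verified. In every case Lemma~\ref{lem:config} is contradicted, so $\textrm{SDT}(\mu,\lambda,p)=\varnothing$. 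I expect the main obstacle to be precisely this case analysis: handling the degenerate shapes (small $\lambda_i$ or $\lambda_{i+1}$, or rows $i,i+1$ lying below row $k$, where the decreasing part or the forced block may be empty), confirming in each case that the relevant cells really are large/small entries, and making sure the genuinely strict inequalities of Lemma~\ref{lem:config} hold rather than only the weak ones supplied by Lemma~\ref{lem:large_inc}.
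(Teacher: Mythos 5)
Your setup matches the paper's: from the non-rim hypothesis you extract two consecutive rows of $\mu$ each extending at least two columns past $\lambda$ (the paper states this directly as $\mu_r-\lambda_r\geq 2$ and $\mu_{r+1}-\lambda_{r+1}\geq 2$; your maximal $2\times 2$ block and the deduction that row $i+1$ ends in column $j+1$ are extra machinery the argument does not need), and you then aim to produce a forbidden configuration of Lemma~\ref{lem:config} using Lemmas~\ref{tabsetup} and~\ref{lem:large_inc}. The problem is that the decisive step is never carried out: you assert that "in every case Lemma~\ref{lem:config} is contradicted" after a case analysis on whether row $i$ ends in column $j+1$ and on its first entry, but you do not identify a specific configuration with its inequalities verified, and you explicitly concede that confirming the relevant cells are large/small entries and upgrading the weak inequalities of Lemma~\ref{lem:large_inc} to the strict ones required by Lemma~\ref{lem:config} is "the main obstacle." That verification is the entire content of the lemma, so as written this is a plan rather than a proof.

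For comparison, the paper closes the argument in one stroke and with a single configuration: since each of the two rows needs at least two large/small entries but (by Lemmas~\ref{tabsetup} and~\ref{lem:large_inc}) can carry at most one at its front, each of rows $r$ and $r+1$ has a large/small entry at its end; the one ending row $r+1$ lies directly below a cell of row $r$ (as $\mu_{r+1}<\mu_r$), and together with the first large/small entry of row $r$ and the monotonicity of Lemma~\ref{lem:large_inc} this produces configuration $(iv)$, with no case split on whether row $i$ ends in column $j+1$ and no appeal to configurations $(i)$--$(iii)$, $(v)$. If you want to salvage your write-up, the fix is to replace your deferred case analysis with this counting argument and then check the inequalities of configuration $(iv)$ explicitly; as submitted, the gap you yourself flag is real.
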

\begin{proof}
    Suppose $\mu / \lambda$ is not a rim.
    Consider any $T\in\textrm{SDT}(\mu, \lambda, p)$.
    Then by definition there is a row $r$ such that $\mu_r - \lambda_r \geq 2$ and $\mu_{r+1} - \lambda_{r+1} \geq 2$.
    By Lemmas~\ref{tabsetup} and ~\ref{lem:large_inc} we have that there can only be one large/small number at the front of each row.
    Therefore, as $\mu_r - \lambda_r \geq 2$ and $\mu_{r+1} - \lambda_{r+1} \geq 2$ both rows must have a large/small number at the end.
    Further, as $\lambda_{r+1} < \lambda_r$ we must have that there is a large/small number in row $r$ directly above a large/small number in row $r+1$.
    However, as the large/small numbers are weakly increasing we have that the tableau forms configuration $(iv)$ with the first large/small number in row $r$ and the two large/small numbers on top of each other.
    Therefore, no such tableau exists.
\end{proof}

\begin{lemma}\label{lem:colDiagonal}
    $|\textrm{SDT}(\mu, \lambda, p)|=\delta_{p,|\theta|}+\delta_{p,|\theta|-1}$ if $\theta$ is a column touching the diagonal.
    Namely, the the only possible SDT is made by placing the large/small numbers on the diagonal up to the last row with an amount of small numbers equal to the number of parts where $\lambda_i=\mu_i$ if $|\theta|=p$ and one more if $|\theta| = p+1$.
\end{lemma}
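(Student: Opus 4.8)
I would follow the outline given just before Lemma~\ref{tabsetup}: first pin down the shape of $\mu$, then show every $T\in\textrm{SDT}(\mu,\lambda,p)$ is forced into a single rigid form, and finally count the few remaining choices. Write $m=|\theta|$. The hypothesis that $\theta=\mu\setminus\lambda$ is a column meeting the diagonal means $\theta$ occupies a single column $c$ and exactly the rows $r,r+1,\dots,c$, with $c=r+m-1$ and lowest cell $(c,c)$; reading this off gives $\mu_i=\lambda_i$ for $i<r$, $\lambda_i=c-i$ and $\mu_i=\lambda_i+1$ for $r\le i\le c-1$, and $\mu_c=1$ with $\ell(\lambda)=c-1$. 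In particular $\mu\ne\lambda$, $\ell(\mu)=c$, the set $\{i:\lambda_i=\mu_i\}$ has size $r-1$, and $|\mu|=|\lambda|+m$. Since $\ell(w(\lambda,1,p))=|\lambda|+p$ while every reading word of a shape-$\mu$ tableau has $|\lambda|+m$ letters, $\textrm{SDT}(\mu,\lambda,p)=\varnothing$ whenever $p>m$; and $p\le0$ is trivial since both sides of the claimed identity vanish, so I would assume $1\le p\le m$.

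The first real step is to prove that for every $T\in\textrm{SDT}(\mu,\lambda,p)$, each row $i\in[c]$ holds exactly one large/small number, located in the diagonal cell $(i,i)$ at the front of the row, while the remaining cells of row $i$ are forced to carry the decreasing run $\lambda_i-2,\dots,0$ when $i<r$ (this is Lemma~\ref{lem:toprows} together with Lemma~\ref{tabsetup}), the decreasing run $\lambda_i-1,\lambda_i-2,\dots,0$ when $r\le i\le c-1$, and nothing when $i=c$. I would prove the staircase case by downward induction on $i$: Lemma~\ref{tabsetup} already places part $i$'s run $\lambda_i-2,\dots,0$ inside row $i$, leaving only two cells (one when $i=c$) unaccounted for, and a second large/small number there, or any large/small number off the diagonal cell, would—after pushing the constraint one row up using Lemma~\ref{lem:large_inc} and condition (b)—create one of the forbidden configurations $(i)$, $(ii)$, $(iv)$ of Lemma~\ref{lem:config}, exactly as in the proofs of Lemmas~\ref{tabsetup}--\ref{notrim}; the induction terminates because the obstruction eventually demands that the front entry of some row be an impossibly large large/small number. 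A byproduct is that for $r\le i\le c-1$ the lone large/small number in row $i$ leaves part $i$'s entry $\lambda_i-1$ in place, so that entry is not itself a small number.

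Granting this, the Corollary following Lemma~\ref{lem:large_inc} shows $T$ is determined by the sequence $x_1,\dots,x_c$ of its large/small values, $x_i$ in cell $(i,i)$; condition (b) forces $x_1>x_2>\dots>x_c\ge0$, and the reading word $\rho(T)$ is $x_c$ followed, for $j=c-1,\dots,1$, by $x_j$ and part $j$'s run (with its leading letter deleted when $j<r$). Every run letter is $\le\lambda_1-1$ and $\lambda_1\notin\operatorname{supp}w(\lambda,1,p)=\{0,\dots,\lambda_1-1\}\cup\{\lambda_1+1,\dots,\lambda_1+p\}$, so the only letters of $\rho(T)$ exceeding $\lambda_1$ are the large $x_i$'s; since every generator in the support appears in every $0$-Hecke expression of $w(\lambda,1,p)$, the large $x_i$'s are exactly $\lambda_1+1,\dots,\lambda_1+p$, whence $x_i=\lambda_1+p+1-i$ for $i\le p$ and $x_{p+1}>\dots>x_c$ are distinct elements of $\{0,\dots,\lambda_1-1\}$. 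Each $s_{\lambda_1+t}$ commutes with every run letter, so sliding them all to the right rewrites $\rho(T)$ as $\rho'(T)\cdot(\lambda_1+1,\dots,\lambda_1+p)$, where $\rho'(T)$ deletes those $p$ entries; as these generators commute with everything and occur once apiece, $\rho(T)$ is a $0$-Hecke expression for $w(\lambda,1,p)=w(\lambda)\circ s_{\lambda_1+1}\circ\cdots\circ s_{\lambda_1+p}$ iff $\rho'(T)$ is one for $w(\lambda)$. Comparing $\rho'(T)$ with the braid-free reduced word of $w(\lambda)$—which differs from the bare concatenation of the runs precisely by the missing leading letters $\lambda_1-1,\dots,\lambda_{r-1}-1$ of the top parts—I would deduce that $x_{p+1},\dots,x_c$ must contain $\lambda_1-1,\dots,\lambda_{r-1}-1$ (so in particular $x_{p+1}=\lambda_1-1$), that each remaining small value must be a left descent of the truncated Grassmannian word sitting just below its cell, and that condition (b) permits at most one such extra value, forcing $x_c=0$ when $m=p+1$; hence $x_1>\dots>x_c$ is uniquely pinned down when it exists, and exists only for $p\in\{m-1,m\}$.

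It then remains to check existence: for $p=m$ the unique candidate runs $\lambda_1+m>\cdots>\lambda_1+1>\lambda_1-1>\cdots>\lambda_{r-1}-1$ down the diagonal, and for $p=m-1$ it is the same list (with $p=m-1$) together with an extra $0$ in $(c,c)$; a direct verification against the braid-free reduced word of $w(\lambda,1,p)$ and against Lemma~\ref{lem:config} shows each is a genuine strict decomposition tableau with the prescribed reading word. This yields $|\textrm{SDT}(\mu,\lambda,p)|=\delta_{p,m}+\delta_{p,m-1}=\delta_{p,|\theta|}+\delta_{p,|\theta|-1}$, and the small numbers in these tableaux are exactly the displaced copies of $\lambda_i-1$ over the $r-1$ indices with $\lambda_i=\mu_i$, plus one more ($0=\lambda_{c-1}-1$ in $(c,c)$) precisely when $p=m-1$—the description in the statement. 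I expect the two genuinely delicate points to be the configuration bookkeeping that forces the rigid form (the first step) and the left-descent analysis that forces $m-p\le1$.
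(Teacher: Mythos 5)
Your shape analysis, the rigidity step (one large/small entry per row, located in the diagonal cell, the remaining cells forced to carry the run), and the identification of the large diagonal values with $\lambda_1+1,\dots,\lambda_1+p$ via the support of $w(\lambda,1,p)$ all track the paper's argument. The genuine gap is in the step that does the remaining work: you assert that ``condition (b) permits at most one such extra value, forcing $x_c=0$ when $m=p+1$,'' deferring the details to a left-descent analysis you never carry out. Condition (b) only says the diagonal values strictly decrease, and neither it nor avoidance of the configurations in Lemma~\ref{lem:config} bounds the number of extra values. Concretely, take $\lambda=(2,1)$, $\mu=(3,2,1)$, $p=1$ (so $m=3$, $r=1$): the filling with rows $(3,1,0)$, $(1,0)$, $(0)$ has strictly decreasing diagonal $3>1>0$ and satisfies conditions (a), (b), (c) --- it is a genuine strict decomposition tableau of shape $\mu$ carrying two ``extra'' values --- yet it must be excluded, and the only thing excluding it is that its reading word $(0,1,0,3,1,0)$ Hecke-evaluates to $\overline{1}\,\overline{2}\,4\,3$ rather than to $w((2,1),1,1)=\overline{2}\,\overline{1}\,4\,3$. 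So the bound $m-p\le 1$ and the placement of the single extra value as a $0$ in the bottom box must come from an honest analysis of the $0$-Hecke evaluation; note also that in the valid $p=|\theta|-1$ tableau the absorbed letter is a run $0$ of the row above, not the extra diagonal entry, so ``each remaining small value is a descent of the word below its cell'' is not even the right local condition. As written, the central counting step is asserted rather than proved, and the one concrete reason offered for it is insufficient.

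For comparison, the paper closes this step without any descent analysis: having shown that every large/small number sits at a row front and that the other cells of row $i$ carry $\mu_i-2,\dots,0$, it argues that the only values available for the row fronts are the $p$ large letters, one letter $\lambda_i-1$ for each of the $r-1$ rows with $\mu_i=\lambda_i$ (and these are forced to occur), plus possibly a single terminal $0$ coming from the last part $\lambda_{\ell(\lambda)}=1$; since the number of rows of $\mu$ equals $|\theta|+(r-1)$, this pins $p$ to $|\theta|$ or $|\theta|-1$ and determines the filling uniquely in each case, after which condition (c) is verified directly. If you want to keep your reading-word route you must actually prove the absorption claims (ruling out duplicated run letters such as the $1$ and $0$ above appearing on the diagonal); otherwise, adapting the paper's count of admissible front values is the shorter path.
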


\begin{proof}
    Suppose there was a row with two large/small numbers, $k$ and $\ell$ with $k < \ell$.
    However, then the tableau would from configuration $(iv)$ with $y=k$, $z=\ell$ and $x$ the element below $\ell$ due to Lemma~\ref{lem:large_inc} and needing to avoid configuration ($i$).
    Therefore, we have that all the large/small numbers must appear at the start of each row.
    Further, this means each row $i$ contains: $\mu_i-2, \mu_i-3, \ldots, 0$.
    This then means that we can only have small numbers corresponding to rows where $\mu_i = \lambda_i$ or $\lambda_i = 1$.
    Note that as $\theta$ is column touching the diagonal, the number of parts of $\mu$ is greater than the number of parts of $\lambda$.
    This means that the last row must start with a large/small number which then means each row must start with a large/small number in order to avoid configuration ($i$).
    Therefore, we must have small numbers corresponding to each row where $\mu_i = \lambda_i$ and we may have the last row end in the small number $0$ corresponding to the last part of $\lambda$.
    As the number of rows of the tableau is equal to $|\theta|$ plus the number of rows where $\mu_i = \lambda_i$ we can either have $|\theta|$ large numbers (where the last row isn't $0$) or $|\theta|-1$ large numbers (where the last row is $0$).
    Note that both of these tableau are in fact SDTs as the only possible way condition ($c$) is violated is if a $\mu_i-2$ above a small/large number witnesses something, however everything in between is either less than $\mu_i-2$ or a bigger small/large number.
\end{proof}

\begin{example}
    For $\lambda = (6,2,1)$ and $\mu = (6,3,2,1)$, $\theta$ is a column touching the diagonal of size $3$.
    Thus, $|\SDT{(\mu, \lambda, 3)}|=|\SDT{(\mu, \lambda, 2)}|=1$, with the tableaux witnessing this fact pictured below.

\begin{center}
\begin{ytableau}
*(green)9&4&3&2&1&0\\
\none & *(green) 8& 1 & 0\\
\none & \none & *(green) 7 & 0\\
\none & \none & \none & *(cyan) 5
\end{ytableau}\hspace{5em}
\begin{ytableau}
*(green)8&4&3&2&1&0\\
\none & *(green) 7& 1 & 0\\
\none & \none & *(cyan) 5 & 0\\
\none & \none & \none & *(cyan) 0
\end{ytableau}
\end{center}
    
\end{example}

\begin{lemma}\label{lem:rowDiag}
    $|\textrm{SDT}(\mu,\lambda,p)|=\delta_{p,|\theta|}$ if $\theta$ is a row touching the diagonal.
    Namely, the only possible SDT is made by placing the large numbers on the diagonal in decreasing order, then place the rest of the large numbers and small numbers in the last row in increasing order.
\end{lemma}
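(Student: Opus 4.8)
\emph{Proof proposal.} The plan is to force the shape of an arbitrary $T\in\textrm{SDT}(\mu,\lambda,p)$ until it is completely rigid, and then to read off both the value of $p$ and the count from that rigidity. Since $\theta=\mu\setminus\lambda$ is a row touching the diagonal, $\mu$ has exactly one more part than $\lambda$: writing $m=\ell(\lambda)$, we have $\mu_i=\lambda_i$ for $i\le m$ and $\mu_{m+1}=|\theta|$, and $\mu_{m+1}<\mu_m=\lambda_m$, so in particular $\lambda_m\ge 2$. Before examining $T$, observe that any filling of $\mu$ has a reading word of exactly $|\mu|=|\lambda|+|\theta|$ letters, whereas any $0$--Hecke expression for $w(\lambda,1,p)$ has at least $\ell\bigl(w(\lambda,1,p)\bigr)=|\lambda|+p$ letters (the braid-free word exhibited for $w(\lambda,a,b,n)$ is reduced, and for $a=1,\ b=p$ it has $|\lambda|+p$ letters). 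Hence $\textrm{SDT}(\mu,\lambda,p)=\varnothing$ whenever $|\theta|<p$, and it remains to handle $|\theta|\ge p$.

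So fix $T\in\textrm{SDT}(\mu,\lambda,p)$ with $|\theta|\ge p$. By Lemma~\ref{lem:toprows}, for each $i\le m$ the row $R_i$ of $T$ equals $(c_i,\lambda_i-2,\lambda_i-3,\dots,1,0)$ with a large/small number $c_i$ placed first; condition (b) of the SDT definition together with Lemma~\ref{lem:large_inc} forces $c_1>c_2>\dots>c_m$, and condition (b) also forces every entry of the bottom row $R_{m+1}$ to be strictly less than $c_m$. The technical heart of the argument is the claim that \emph{every entry of $R_{m+1}$ is itself a large or small number}. I would prove this by contradiction: suppose some cell of $R_{m+1}$ carries an entry that is neither, pick the leftmost such cell, and use that the cell directly above it in $R_m$ carries one of the now-known values $\lambda_m-2,\lambda_m-3,\dots$, that $R_{m+1}$ is strictly decreasing to its left (unimodality), and that the large/small numbers already placed are weakly increasing in reading order (Lemma~\ref{lem:large_inc}); a short case analysis then exhibits one of the forbidden configurations $(ii)$--$(v)$ of Lemma~\ref{lem:config}. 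This case check is the main obstacle, the row analogue of the diagonal bookkeeping already carried out in Lemmas~\ref{tabsetup} and~\ref{lem:colDiagonal}, and I expect it to be where the real work lies.

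Granting the claim, the large/small numbers of $T$ are precisely the $m$ entries $c_1>\dots>c_m$ together with the $|\theta|$ entries of $R_{m+1}$; by unimodality and Lemma~\ref{lem:large_inc} the latter are strictly increasing, and by condition (b) they are all smaller than $c_m$, so these $m+|\theta|$ numbers are pairwise distinct. But every large number is one of $\lambda_1+1,\dots,\lambda_1+p$ (larger indices lie outside the support of $w(\lambda,1,p)$) and every small number is one of $\lambda_1-1>\lambda_2-1>\dots>\lambda_m-1$, a pool of only $m+p$ distinct values. Hence $m+|\theta|\le m+p$, i.e.\ $|\theta|\le p$, and with $|\theta|\ge p$ this gives $|\theta|=p$; in particular $\textrm{SDT}(\mu,\lambda,p)=\varnothing$ unless $p=|\theta|$. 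When $p=|\theta|$, equality forces the $m+p$ large/small numbers to exhaust the pool and, by Lemma~\ref{lem:large_inc}, to occur in reading order as its increasing list: the $m$ largest pool values fill the diagonal cells of $R_1,\dots,R_m$ in decreasing order and the remaining $p$ fill $R_{m+1}$ in increasing order (this is the description in the statement, the ``large numbers on the diagonal'' and the ``rest of the large numbers and small numbers'' in the last row). By the corollary following Lemma~\ref{lem:large_inc} ($T$ is determined by the positions of its large/small numbers and which are duplicates --- here there are none) this pins $T$ down uniquely, and a direct check that this tableau satisfies conditions (a)--(c) and has the correct reading word gives $|\textrm{SDT}(\mu,\lambda,p)|=1$ in this case. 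Together this yields $|\textrm{SDT}(\mu,\lambda,p)|=\delta_{p,|\theta|}$.
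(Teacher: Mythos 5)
Your skeleton is close to the paper's (rows $1,\dots,m$ forced to be $(c_i,\lambda_i-2,\dots,0)$ with $c_1>\dots>c_m$, then a counting argument about the bottom row), but there is a genuine gap at exactly the step you flag as "the real work": the claim that every cell of $R_{m+1}$ carries a large or small number. The method you sketch for it --- derive one of the forbidden configurations of Lemma~\ref{lem:config} from unimodality, the known entries of $R_m$, and Lemma~\ref{lem:large_inc} --- cannot succeed, because those ingredients do not exclude generic entries in the bottom row. Concretely, for $\lambda=(5,4)$, $\mu=(5,4,3)$ take the filling with rows $(7,3,2,1,0)$, $(6,2,1,0)$, $(3,0,4)$: all rows are unimodal, conditions (b) and (c) hold (so by Lemma~\ref{lem:config} no forbidden configuration occurs), and the large/small numbers $3,4,6,7$ are weakly increasing in reading order, yet the middle entry $0$ of the bottom row is neither large nor small. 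This filling is a perfectly good strict decomposition tableau --- its reading word $3\,0\,4\,6\,2\,1\,0\,7\,3\,2\,1\,0$ is a reduced word for $\overline{5}\,\overline{4}\,\overline{1}\,2\,3\,7\,8\,6=w((5,4,1),1,2)$, so it lies in $\textrm{SDT}((5,4,3),(5,4,1),2)$ --- and it is excluded from $\textrm{SDT}((5,4,3),(5,4),p)$ only by the reading-word hypothesis, not by any local pattern. Hence the crux of the lemma cannot be delegated to a configuration case analysis: you must argue, as the paper does (in the spirit of Lemma~\ref{tabsetup}), with the reading word itself --- the forced upper rows leave the bottom-row cells as the only places where the missing letters $\lambda_i-1$ and the $p$ large letters can occur, and a repeated large/small value in that row is incompatible with Lemma~\ref{lem:large_inc} plus unimodality; that is how the case $p<|\theta|$ is actually killed.

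A secondary, smaller deferral: for $p=|\theta|$ you assert that the unique candidate tableau "has the correct reading word" as part of a direct check. Verifying conditions (a)--(c) on the explicit tableau is indeed routine, but the statement that its reading word is a $0$-Hecke word for $w(\lambda,1,p)$ needs its own short argument (the paper is terse here as well), and with the main claim also unproven, both the nonexistence half ($p\neq|\theta|$) and the existence half of your proof currently rest on unestablished steps.
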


\begin{proof}
    By the same arguments as the last proof we require that each row begins with a large/small number, further by Lemma~\ref{tabsetup} for each row above $\theta$ there can only be one large/small number.
    Note that if $p > |\theta|$, there is not enough space in the tableau to get the right reading word.
    As $\lambda$ is the same as $\mu$ besides the last part, we have that each row besides the last row must contain $\lambda_r - 2$ through $0$, thus each row only has one free slot.
    Therefore, if $p < |\theta|$ we require a duplicate large or small number in the last row.
    However, if a large number is in the front of the last row it will witness the small numbers in the row.
    Further, our reading word requires that the large/small numbers appear in increasing order, thus if we have a repeat the row is not unimodal.
    Finally, when $p = |\theta|$ we have that the tableau is an SDT as the numbers are in increasing order and thus cannot be witnesses to each other.
    Further, they would be witnesses to each other if they didn't appear in increasing order, giving us that this is the only SDT with the right reading word.
\end{proof}

\begin{example}
    For $\lambda = (5,4)$ and $\mu = (5,4,3)$, $\theta$ is a row touching the diagonal of size $3$.
    Thus, $|\SDT{(\mu, \lambda, 3)}|=1$, with the tableau witnessing this fact pictured below.

\begin{center}
\begin{ytableau}
*(green)8&3&2&1&0\\
\none & *(green) 7& 2 & 1 & 0\\
\none & \none & *(cyan) 3 & *(cyan) 4 & *(green) 6
\end{ytableau}
\end{center}
    
\end{example}

\begin{lemma}\label{lem:colNotDiag}
    $|\textrm{SDT}(\mu,\lambda,p)|=2\delta_{p,|\theta|}+\delta_{p,|\theta|-1}$ if $\theta$ is a column not touching the diagonal.
    Namely, the only possible SDT are made by placing the large/small numbers on the diagonal in decreasing order up to the last row of $\theta$.
    For that last row you may place a large/small number at the front if $p=|\theta|$ or at the back.
\end{lemma}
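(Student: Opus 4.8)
The plan is to follow the template of Lemmas~\ref{lem:colDiagonal} and~\ref{lem:rowDiag}, isolating the single place where the absence of a diagonal box changes the count. Suppose $\theta$ occupies the column $c$ in rows $r,r+1,\dots,r+h-1$ with $h=|\theta|$; since $\theta$ is a column, $\hat\theta=\varnothing$ automatically, and ``$\theta$ disjoint from the diagonal'' is equivalent to $r+h-1<c$, i.e.\ to $\lambda_{r+h-1}\ge 1$, so that $\mu$ and $\lambda$ have the same number of parts.

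The first step is to establish the rigid ``staircase'' structure. As in Lemma~\ref{lem:colDiagonal}, no row of a tableau $T\in\textrm{SDT}(\mu,\lambda,p)$ can contain two large/small numbers: by Lemma~\ref{lem:large_inc} they occur in weakly increasing order and, together with the large/small number forced at the front of the next row down, produce configuration~$(iv)$. Hence every large/small number sits at the front of its row, and Lemmas~\ref{tabsetup} and~\ref{lem:large_inc} together with unimodality then force each row $i$ strictly above row $r+h-1$ to be a single large/small number followed by the decreasing run $\mu_i-2,\dots,0$, with these front entries weakly increasing up the diagonal staircase. The one exception is row $r+h-1$, the bottom row of $\theta$: because $\mu$ and $\lambda$ have equally many parts there is no lower row forcing it (via configuration~$(i)$) to begin with a large/small number, so its single extra box may instead be placed at the back of the row.

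The second step is to count by $p$. As in Lemma~\ref{lem:colDiagonal}, a small number can occur only in a row with $\mu_i=\lambda_i$ or $\lambda_i=1$; since $\lambda_i=c-i\ge 2$ and $\mu_i>\lambda_i$ for $r\le i<r+h-1$, each of rows $r,\dots,r+h-2$ must carry a \emph{large} number at its front, accounting for $h-1$ large numbers. The condition that the reading word be a $0$-Hecke word for $w(\lambda,1,p)$ then determines everything from row $r+h-1$ alone. If $p=h$, the extra box of that row holds the last large number, placed either at the front (completing the diagonal staircase) or at the back; both fillings satisfy (a)--(c) --- the only way (c) could fail is a $\mu_i-2$ sitting above a large/small number, but every entry strictly between them is either smaller than $\mu_i-2$ or a strictly larger large/small number, so nothing is witnessed --- and this case contributes $2$. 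If $p=h-1$ there is no further large number, and since $|\mu|-|\lambda|-p=1$ the extra box must absorb one redundant $0$; unimodality and condition~(b) relative to the row below pin it to the unique spot immediately after the natural $0$ of row $r+h-1$, giving exactly one tableau. If $p<h-1$ there is no room for the required large numbers, and if $p>h$ there are not enough boxes above $\theta$ to record $w(\lambda,1,p)$, so in either case $\textrm{SDT}(\mu,\lambda,p)=\varnothing$. Summing gives $|\textrm{SDT}(\mu,\lambda,p)|=2\delta_{p,|\theta|}+\delta_{p,|\theta|-1}$, and the tableaux produced are precisely those described in the statement.

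The step I expect to be the main obstacle is making the ``back'' placement precise and verifying that it, and only it, yields an SDT. One must pin down the (possibly interior) position of the appended large/small number in row $r+h-1$ using unimodality and condition~(b) against both neighbouring rows, rule out the competing fillings --- a large number placed deeper in the interior, or a copy of $1$ pushed up from a lower row --- by showing they break unimodality or cease to be a reduced $0$-Hecke word for $w(\lambda,1,p)$, and confirm that this placement creates none of configurations~$(ii)$, $(iii)$, $(v)$ with the surrounding rows. The edge cases $\lambda_{r+h-1}=1$ (natural content of that row being $0$ alone) and $r+h-1=\ell(\lambda)$ (no rows below $\theta$) should be handled separately, but by the same analysis and with the same outcome.
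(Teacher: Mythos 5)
Your first step (no row contains two large/small numbers, hence every large/small entry sits at the front of its row except possibly in the bottom row of $\theta$, which may instead carry it at the back) is exactly the paper's argument. The counting step, however, contains a genuine error. From ``a small number can occur only in a row with $\mu_i=\lambda_i$'' you deduce that each of rows $r,\dots,r+h-2$ must carry a \emph{large} number at its front and that the bottom row of $\theta$ receives ``the last large number''. That misreads the constraint: it limits which \emph{values} $\lambda_i-1$ may occur as small numbers (those with $\mu_i=\lambda_i$), not which rows may host a small number at their front. Whenever $\lambda$ has parts above the column (so $r>1$), the forced small numbers $\lambda_1-1,\dots,\lambda_{r-1}-1$ are smaller than every large number, so by Lemma~\ref{lem:large_inc} they must occupy the \emph{lowest} of the available front slots; in particular the bottom row of $\theta$ gets the smallest large/small number, which is typically a small number, while the large numbers sit in the top rows. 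The paper's own example ($\lambda=(6,4,3,1)$, $\mu=(6,5,4,1)$) has the small number $5=\lambda_1-1$ at the front (or back) of the bottom row of $\theta$ with the large numbers $7,8$ above it; the arrangement you describe (larges in the $\theta$ rows, the forced smalls necessarily above them) would itself violate Lemma~\ref{lem:large_inc}. The count $2$ for $p=|\theta|$ is still correct, but only because the front/back dichotomy applies to whatever the smallest large/small entry happens to be, which your argument does not establish.

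The case $p=|\theta|-1$ is where the proof genuinely breaks. You claim the extra box of the bottom row ``must absorb one redundant $0$ immediately after the natural $0$''; such a row (ending $\dots,1,0,0$) is not even admissible, since adjacent equal entries cannot be arranged into the strictly increasing signed sequences $\mathsf{T},\mathsf{B}$ in the SDT definition. The actual unique tableau is different: the duplicated letter is the small number $\lambda_{r+h-2}-1$, which equals $\lambda_{r+h-1}$ because $\theta$ is a column, and it is forced to the \emph{end} of the bottom row of $\theta$ precisely because it cannot commute past $\lambda_{r+h-1}-1$ (this is the paper's argument, and it is the third tableau in the example following the lemma, where the entry is $3=\lambda_2-1$ at the end of row $3$, with all front entries dropping to the next smaller large/small value). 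So your uniqueness argument pins down a filling that does not exist and never accounts for the filling that does; the $p=|\theta|-1$ count needs to be redone along these lines before the lemma is proved.
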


\begin{proof}
    Suppose there was a row with two large/small numbers, $k$ and $\ell$ with $k < \ell$.
    However, then the tableau would from configuration $(iv)$ with $y=k$, $z=\ell$ and $x$ the element below $\ell$ due to Lemma~\ref{lem:large_inc} and needing to avoid configuration ($i$).
    Therefore, we have that all the large/small numbers must appear at the start of each row with the possible exception of the last row of $\lambda$ as there are no more large/small numbers below that row.
    Further, this means each row $i$ contains: $\mu_i-2, \mu_i-3, \ldots, 0$.
    This then means that we can only have front small numbers corresponding to rows where $\mu_i = \lambda_i$.
    If $|\theta|=p$, then the large/small number in the last row of $\theta$ either corresponds to a large number or a small number from a row where $\mu_i = \lambda_i$, either was this number is guaranteed to be at least $2$ larger than the biggest number in the row.
    Therefore, the large number can either appear at front or end of the row, corresponding to two different SDT.
    If $|\theta|=p+1$, then the small number in the last row of $\theta$ must be $\lambda_{\ell-1}-1$ where $\lambda_{\ell}$ is the last part of $\lambda$ as all the other large/small numbers have already been placed and configuration ($i$) prevents any duplicates.
    This forces the small number to be at the end of row as $\lambda_{\ell-1}-1$ cannot commute with $\lambda_{\ell}-1$ because due to $\theta$ being a column, $\lambda_{\ell-1}-1 = \lambda_{\ell}$.
    Therefore for $|\theta|=p+1$, there is only SDT.
    Note that these tableau are in fact SDTs as the only possible way condition ($c$) is violated is if a $\mu_i-2$ above a small/large number witnesses something, however everything in between is either less than $\mu_i-2$ or a bigger small/large number.
\end{proof}

\begin{example}
    For $\lambda = (6,4,3,1)$ and $\mu = (6,5,4,1)$, $\theta$ is a column not touching the diagonal of size $3$.
    Thus, $|\SDT{(\mu, \lambda, 3)}|=2$ and $|\SDT{(\mu, \lambda, 2)}|=1$, with the tableaux witnessing this fact pictured below.

\begin{center}
\begin{ytableau}
*(green)8&4&3&2&1&0\\
\none & *(green) 7& 3 & 2 & 1 & 0\\
\none & \none & *(cyan) 5 & 2 & 1 & 0\\
\none & \none & \none & 0
\end{ytableau}
\begin{ytableau}
*(green)8&4&3&2&1&0\\
\none & *(green) 7& 3 & 2 & 1 & 0\\
\none & \none & 2 & 1 & 0 & *(cyan) 5\\
\none & \none & \none & 0
\end{ytableau}\hspace{1em}
\begin{ytableau}
*(green)7&4&3&2&1&0\\
\none & *(cyan) 5& 3 & 2 & 1 & 0\\
\none & \none & 2 & 1 & 0 & *(cyan) 3\\
\none & \none & \none & 0
\end{ytableau}
\end{center}
    
\end{example}

\begin{lemma}\label{lem:rowNotDiag}
    $|\textrm{SDT}(\mu,\lambda,p)|=2\delta_{p,|\theta|}+\delta_{p,|\theta|-1}$ if $\theta$ is a row not touching the diagonal.
    Namely, the only possible SDT is made by placing the large/small numbers on the diagonal in decreasing order up to the row of $\theta$.
    For that row you may place $|\theta|$ large/small numbers with none at the front if $p=|\theta|$ or one in the front (being a duplicate if $p=|\theta|-1$).
\end{lemma}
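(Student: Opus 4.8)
The plan is to run essentially the argument of the proof of Lemma~\ref{lem:colNotDiag}, with the single row $R_r$ of $\mu$ containing $\theta$ now playing the role the last row of $\theta$ played there. Write $r$ for the index of that row, so $\mu_r = \lambda_r + |\theta|$ and $\mu_i = \lambda_i$ for $i \neq r$; since $\theta$ is disjoint from the diagonal we have $r \leq k := \ell(\lambda)$ and $\lambda_r \geq 1$. First I would pin down the rigid part of an arbitrary $T \in \textrm{SDT}(\mu,\lambda,p)$: by Lemma~\ref{lem:toprows} each of $R_1,\dots,R_{r-1}$ carries exactly one large/small number, sitting in its diagonal box, and I would then show that none of $R_{r+1},\dots,R_k$ carries a large/small number, so each such row is exactly $\lambda_i-1,\lambda_i-2,\dots,0$. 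For this, if some $R_i$ with $i > r$ held a large/small number it would, by Lemma~\ref{tabsetup}, occupy the unique free box of $R_i$, and then the weak increase of Lemma~\ref{lem:large_inc}, unimodality, and avoidance of configuration~$(i)$ would force a large/small number into the diagonal box of every row from $R_{r+1}$ up through $R_r$, and hence, just as in the proof of Lemma~\ref{notrim}, two strictly decreasing large numbers in a single higher row, which is impossible. Thus every entry of $T$ is determined except the filling of $R_r$, which by Lemma~\ref{tabsetup} consists of the block $\lambda_r-2,\dots,0$ together with $|\theta|+1$ further entries.

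Next I would analyse $R_r$. By Lemma~\ref{lem:large_inc} and unimodality its large/small numbers form an increasing run to the left of the block $\lambda_r-2,\dots,0$, since a large/small number to the right of that block would create an ascent after a descent. So $R_r$ is determined once we know how many of its $|\theta|+1$ extra boxes are large/small numbers and whether the remaining box, holding $\lambda_r-1$, sits at the very front of the row or immediately before $\lambda_r-2$. When $p = |\theta|$ the reading word of $T$ has length $|\mu| = |\lambda| + p$, which equals the length of $w(\lambda,1,p)$, so it is reduced and uses each letter once; the large/small numbers then distribute among the diagonal boxes of $R_1,\dots,R_{r-1}$ and the extra boxes of $R_r$ in the unique fashion compatible with Lemma~\ref{lem:large_inc}, and each of the two placements of $\lambda_r-1$ produces a genuine SDT (Condition~(c) cannot fail, since a $\mu_i-2$ sitting above a large/small number sees only smaller entries or strictly larger large/small numbers), so there are exactly two tableaux. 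When $p = |\theta|-1$ the reading word has exactly one redundant letter, so $R_r$ acquires one extra box beyond the reduced case; a case check against the forbidden configurations of Lemma~\ref{lem:config} rules out every position for this duplicate except the front of $R_r$, leaving a unique tableau. Finally $p > |\theta|$ leaves no room for a word of the required length, and $p < |\theta|-1$ would need at least two extra boxes in $R_r$, which the same analysis forbids; hence $|\textrm{SDT}(\mu,\lambda,p)| = 2\delta_{p,|\theta|} + \delta_{p,|\theta|-1}$.

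The step I expect to be the main obstacle is the claim that the rows below $R_r$ carry no large/small number and, equivalently, that for $p < |\theta|-1$ (as well as for the stray duplicated fillings when $p = |\theta|-1$) the surplus boxes of $R_r$ cannot be filled. Unlike the column case of Lemma~\ref{lem:colNotDiag}, there is here no lower row of $\theta$ into which a displaced large/small number can fall, so this must be extracted purely from the five forbidden configurations of Lemma~\ref{lem:config}; the cascade argument should be stated carefully, in particular handling the degenerate case $\lambda_k = 1$, where $0$ is at once a shape letter and a small number, and the subcase $r = 1$, where Lemma~\ref{lem:toprows} contributes nothing and the whole argument lives in the analysis of $R_r$. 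Everything else is bookkeeping closely parallel to the cases already settled.
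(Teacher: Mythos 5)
Your skeleton matches the paper's (rows above the row $R_r$ of $\theta$ pinned down via Lemma~\ref{lem:toprows}, rows below forced to be $\lambda_i-1,\dots,0$, then a two-or-one count for the fillings of $R_r$), but the step you yourself flag as the main obstacle --- that no row below $R_r$ carries a large/small number --- is a genuine gap, and the route you propose for it cannot work. A lower row has only one free box, so nothing forces the entry of $R_r$ (or of any intermediate row) sitting directly above that row's last box to be a large/small number, and the \ref{notrim}-style configuration~(iv) contradiction you invoke simply never materializes. Concretely, for $\lambda=(5,2,1)$, $\mu=(5,4,1)$, the shifted filling with rows (top to bottom) $9\,3\,2\,1\,0$, $7\,1\,0\,8$, $6$ is a bona fide strict decomposition tableau: rows are unimodal, conditions (b) and (c) hold, all five configurations of Lemma~\ref{lem:config} are avoided, the large numbers weakly increase in reading order, and the conclusions of Lemmas~\ref{tabsetup}, \ref{lem:large_inc} and \ref{lem:toprows} are all satisfied --- yet it has a large number below the row of $\theta$. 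So the claim cannot be ``extracted purely from the five forbidden configurations''; what excludes such fillings is the reading word, and this is exactly how the paper argues: since the word must be a $0$-Hecke word for $w(\lambda,1,p)$, the letter $\lambda_i-1$ contributed by each part with $i$ below $\theta$ (in particular the $0$ from $\lambda_k$) must itself appear below $\theta$, forcing each lower row to be exactly $\lambda_i-1,\dots,0$ and pushing every displaced small number and all (at most $p$ distinct) large numbers into $R_r$. In my example this is what fails: four distinct large letters would force $p\ge 4$, incompatible with $|\mu|=|\lambda|+2$. Without this reading-word input your argument does not rule out $p<|\theta|-1$ either, since the same mechanism is what caps the surplus of $R_r$.

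A secondary but real problem: you have the unimodality convention backwards. Rows of an SDT decrease and then increase (see the definitions of $\mathsf{T}(R)$, $\mathsf{B}(R)$ and every example in the paper), so the large/small numbers of $R_r$ form an increasing run at the \emph{right} end of the row, with at most one additional such entry at the front --- not ``an increasing run to the left of the block $\lambda_r-2,\dots,0$''. Your binary choice (where $\lambda_r-1$ sits) does happen to parametrize the same two tableaux for $p=|\theta|$ as the paper's description (smallest large/small number at the front versus at the end), and the duplicate-at-the-front picture for $p=|\theta|-1$ is right, but the verification of Condition~(c) has to be redone with the correct orientation, and the assertion that the $|\theta|+1$ free boxes of $R_r$ must be filled by large/small numbers (rather than by repeats of small letters) again rests on the reading word, not on the configurations.
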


\begin{proof}
    For each row above $\theta$ must begin with a large/small number and can only have one large/small number due to Lemma~\ref{tabsetup}.
    Note that having a large/small number at the start of each of those rows forces there to be a small number corresponding to that row somewhere below.
    Any row below $\theta$ cannot contain a large/small number as in order to have the correct reading word each $\lambda_i-1$, for row $i$ below $\theta$, must also be below $\theta$.
    This then forces each row $i$ below $\theta$ to be $\lambda_i-1, \lambda_i-2, \ldots, 0$.
    As rows below $\theta$ cannot contain small numbers this means that $\lambda_j-1$ must be in row $j$ where $j$ is the row of $\theta$, therefore row $j$ has $|\theta|$ large/small numbers.
    If $|\theta| = p$, then the smallest large/small number can either be at the front or the back of the row, giving two SDTs.
    If $|\theta| = p-1$, then a large/small number must be duplicated which has to be the smallest large/small number in row $j$ and must be at the front and back of the row.
    Note that these tableau are in fact SDTs as the only possible way condition ($c$) is violated is if a $\mu_i-2$ above a small/large number witnesses something, however everything in between is either less than $\mu_i-2$ or a bigger small/large number.
\end{proof}

\begin{example}
    For $\lambda = (6,4,3,1)$ and $\mu = (6,5,4,1)$, $\theta$ is a row not touching the diagonal of size $3$.
    Thus, $|\SDT{(\mu, \lambda, 3)}|=2$ and $|\SDT{(\mu, \lambda, 2)}|=1$, with the tableaux witnessing this fact pictured below.

\begin{center}
\begin{ytableau}
*(green)9&4&3&2&1&0\\
\none & 1& 0 & *(cyan) 5& *(green) 7 & *(green) 8\\
\none & \none & 0
\end{ytableau}
\begin{ytableau}
*(green)9&4&3&2&1&0\\
\none & *(cyan) 5& 1 & 0& *(green) 7 & *(green) 8\\
\none & \none & 0
\end{ytableau}\hspace{1em}
\begin{ytableau}
*(green)8&4&3&2&1&0\\
\none & *(cyan)5 & 1& 0 & *(cyan) 5& *(green) 7\\
\none & \none & 0
\end{ytableau}
\end{center}
    
\end{example}

\begin{lemma}\label{lem:rowRecursion}
    If $\theta/\hat{\theta}$ is a row then $\textrm{SDT}(\mu, \lambda, p)$ follows the recursion in Theorem \ref{thm:buchravikumar}, where $\theta = \mu \setminus \lambda$. 
\end{lemma}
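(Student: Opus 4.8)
The plan is to carry out the outline sketched after Theorem~\ref{thm:buchravikumar}: given $T\in\textrm{SDT}(\mu,\lambda,p)$, isolate the row $r_0$ meeting the north-east arm, show the rows above $r_0$ are forced, peel off the $a=|\theta\setminus\hat\theta|$ arm boxes of row $r_0$ to leave an SDT of the smaller skew shape $\hat\theta$, and check that the number of legal fillings of the arm reproduces the coefficients in the Buch--Ravikumar recursion. The hypothesis that $\theta/\hat\theta$ is a row splits into four situations to be handled in parallel: $\hat\theta=\varnothing$ (parts (b),(c) of Theorem~\ref{thm:buchravikumar}, already done in Lemmas~\ref{lem:colDiagonal}--\ref{lem:rowNotDiag}), the arm connected to $\hat\theta$ (part (d)), the arm disconnected of size $1$ (part (e)), and the arm disconnected of size $>1$ (part (f)).

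First I would fix notation. Let $r_0$ be the topmost row on which $\theta=\mu\setminus\lambda$ is nonempty. Since $\theta$ is a rim — otherwise $\textrm{SDT}(\mu,\lambda,p)=\varnothing$ by Lemma~\ref{notrim} — and its north-east arm is a row, that arm is the last $a$ boxes of row $r_0$, and $\hat\theta=D_{\hat\mu}\setminus D_\lambda$ where $\hat\mu$ agrees with $\mu$ off row $r_0$ and $\hat\mu_{r_0}$ is $\lambda_{r_0}$ in the disconnected cases and $\lambda_{r_0}+1$ in the connected case; the rim hypothesis together with strictness of $\lambda$ shows $\hat\mu$ is again strict. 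By Lemmas~\ref{tabsetup}, \ref{lem:large_inc}, \ref{lem:toprows} and the corollary following Lemma~\ref{lem:large_inc}, any such $T$ is determined by recording which boxes carry large/small numbers and which are duplicates; each of rows $1,\dots,r_0-1$ holds a single large/small number at its front; and in row $r_0$ the decreasing block $\lambda_{r_0}-1,\dots,0$ (or $\lambda_{r_0}-2,\dots,0$, with $\lambda_{r_0}-1$ displaced) occupies an initial segment, after which the remaining entries — including the surviving box of $\hat\theta$ in row $r_0$ and all of the arm — form a weakly increasing run of large/small numbers.

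Next I would define $\Phi\colon T\mapsto\hat T$ by deleting the $a$ arm boxes of row $r_0$ and relabelling the large numbers back to an initial segment, and prove $\Phi$ is a bijection
\[
\textrm{SDT}(\mu,\lambda,p)\ \xrightarrow{\ \sim\ }\ \textrm{SDT}(\hat\mu,\lambda,p-a)\ \sqcup\ \textrm{SDT}(\hat\mu,\lambda,p-a+1)
\]
in the connected case, and the corresponding disjoint unions carrying the multiplicities $2,2$ of part (e) and $2,3,1$ of part (f) in the disconnected cases. That $\hat T$ is again an SDT is checked via Lemma~\ref{lem:config}: no instance of configurations $(i)$--$(iii)$ can be created by deleting boxes, and the only possible new instances of $(iv)$,$(v)$ sit at the new right end of row $r_0$ and are ruled out using the value of the surviving $\hat\theta$-box and of the entry beneath it, together with condition~(b) propagated up from the rows below. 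For the count I would track what the reading word loses: since the reduced word for $w(\lambda,1,\cdot)$ in play is braid-free and its large part is the increasing run $\lambda_1+1,\dots,\lambda_1+p$, the distinct large values in $T$ are exactly $\lambda_1+1,\dots,\lambda_1+p$, the arm carries a weakly increasing string of large values, and removing it decreases the number of distinct large values by $a$ when the arm values are all distinct and none is repeated elsewhere, and by $a-1$ when exactly one of them repeats the value immediately to its left (the surviving $\hat\theta$-box, or in the disconnected cases a converted small number). That at most one such repeat can occur — which is what bounds the number of terms in the recursion — follows from the geometry that the arm hangs off $\hat\theta$ through at most one box, together with conditions (b) and (c) coupling row $r_0$ to rows $r_0\pm1$. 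Conversely, given $\hat T$ in a target set, reattaching the $a$ arm boxes as the appropriate large/small numbers yields a unique element of $\textrm{SDT}(\mu,\lambda,p)$ in the connected case, and exactly the stated number of elements in the disconnected cases — where the string of $a$ new entries may instead be placed at the front of row $r_0$, and a small number may or may not be duplicated — mirroring the analysis already carried out in Lemmas~\ref{lem:colNotDiag} and \ref{lem:rowNotDiag}.

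The main obstacle is exactly this reading-word bookkeeping: cataloguing the admissible contents of the arm and of the adjacent $\hat\theta$-box and checking that they partition into the families indexed by $p-a$, $p-a+1$ (and $p-a+2$ for part (f)) with precisely the multiplicities $1,1$ (respectively $2,2$ and $2,3,1$) required by Theorem~\ref{thm:buchravikumar}, with neither over- nor under-counting. This forces a delicate simultaneous use of the valley (unimodality) shape of row $r_0$, condition (c) linking row $r_0$ to rows $r_0-1$ and $r_0+1$, the weak-increase constraint of Lemma~\ref{lem:large_inc}, and the conversion operation for small numbers; once the possible arm contents are pinned down, verifying that $\Phi$ and its inverse land in the claimed sets — and that $\hat\mu$ is strict — is routine.
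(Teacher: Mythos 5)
Your overall strategy --- peel the north-east arm off row $r_0$ and set up a multiplicity-weighted correspondence between $\textrm{SDT}(\mu,\lambda,p)$ and the sets $\textrm{SDT}(\hat\mu,\lambda,p')$ --- is the same as the paper's (the paper runs the map in the opposite direction, from the smaller tableaux to the larger ones via four explicit modifications of row $r$, and then inverts it). The problem is that what you label ``the main obstacle'' is in fact the entire content of the lemma, and you never carry it out. The paper's proof consists precisely of the catalogue you defer: the admissible modifications of row $r$ are (1) append $a$ new large numbers, (2) insert $\lambda_r-1$ into the second position and append $a-1$ new large numbers, (3) append a duplicate of the front entry together with $a-1$ new large numbers, (4) do both (2) and (3) with $a-2$ new large numbers, with the additional option in cases (2) and (4) of converting copies of $\lambda_r-1$ below row $r$ into large numbers; one must check via Lemma~\ref{lem:config} that each modification yields an SDT and that the inverse does as well, and observe that availability of the cases (only (1),(3) when the arm is connected to $\hat\theta$; (1),(2),(3) when disconnected with $a=1$; all four when disconnected with $a>1$) is exactly what produces the coefficients $1{+}1$, $2{+}2$ and $2{+}3{+}1$. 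In your write-up these coefficients are asserted, not derived, so the proof of the recursion is missing.

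Moreover, the map you do specify is not the correct inverse. Defining $\Phi$ as ``delete the $a$ arm boxes of row $r_0$ and relabel'' fails in the disconnected cases in which $\lambda_r-1$ has been inserted into the interior of row $r_0$: there the last $a$ boxes of that row consist of only $a-1$ (or $a-2$) appended large/duplicate entries together with the tail of the decreasing run $\lambda_r-1,\dots,0$, so deleting them destroys the run and the resulting filling no longer reads as a Hecke word for $w(\lambda,1,p')$. The correct inverse (as in the paper) removes the large numbers at the end of row $r$ other than the leftmost one, removes $\lambda_r-1$ from the interior of the row if present (shifting left), converts the smallest large number back into $\lambda_r-1$ when no copy of $\lambda_r-1$ remains below row $r$, and only then relabels. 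Relatedly, your explanation of where the extra multiplicities come from (``the string of $a$ new entries may instead be placed at the front of row $r_0$, and a small number may or may not be duplicated'') does not match the actual mechanism, which is the interplay of inserting $\lambda_r-1$, duplicating the front entry of row $r$, and converting lower copies of $\lambda_r-1$ into large numbers; without pinning this down one cannot verify that the fibers have sizes exactly $1,1$, respectively $2,2$ and $2,3,1$.
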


\begin{proof}
    Consider a tableau in $\textrm{SDT}(\hat{\mu}, \lambda, p)$ we can transform this tableau into an element of $\textrm{SDT}(\mu, \lambda)$, where $\mu -\lambda$ and $\hat{\mu}-\lambda$ is a rim, and $\mu-\hat{\mu}$ is a row of $a$ boxes in row $r$, in one of the four following ways:
\begin{enumerate}
    \item Add in $a$ new large numbers to the end of row $r$. Note that when adding in new large numbers if there are small numbers above row $r$ swap the added large numbers with the small numbers above.
    \item Add in $\lambda_r-1$ to the second space of row $r$, shifting the other numbers right, and put $a-1$ new large numbers to the end of row $r$.
    \item Add a duplicate of the front entry and $a-1$ new large numbers to the end of row $r$.
    \item Add in $\lambda_r-1$ to the second space of row $r$, shifting the other numbers right, and a duplicate of the front entry and $a-2$ new large numbers to the end of row $r$.
\end{enumerate}
Further, for each case where we add in $\lambda_r-1$ into row $r$ we may change copies of $\lambda_r-1$ below row $r$ into large numbers to increase our large number count by one.
Note that we can only add in $\lambda_r-1$ to the row if $\mu-\hat{\mu}$ is disconnected from $\hat{\mu}-\lambda$ as otherwise $\lambda_r-1$ is already in row $r$.
Additionally, note that we can only add in $\lambda_r-1$ and a duplicate of the front entry if $a > 1$ as it requires adding at least two boxes.

To see that all of these created tableau are in fact SDT, first consider the numbers added to the end of row $r$.
By definition of north-east arm we have that there are no boxes below the added boxes, therefore if they create a configuration from Lemma~\ref{lem:config} the new numbers must serve the role as $b$, $y$, or $x$.
The large number at the front of row $r-1$ prevents configuration ($i$) from forming.
The lact of any other large numbers in row $r-1$ means configuration ($iii$), ($iv$), and ($v$).
Finally configuration ($ii$) cannot occur as all numbers left of any of the added large numbers must be smaller.
Next consider added $\lambda_r-1$ and the other shifted numbers in row $r$.
Note that if $r > 1$, then above $\lambda_r-i$ is $\lambda_{r-1}-i$ and that row $r-1$ is decreasing at least up to $\lambda_{r-1}-i$, thus $\lambda_{r}-i$ cannot form a configuration as part of the bottom row of the configuration.
Further, as everything left of $\lambda_{r}-i$ is larger than it, $\lambda_{r}-i$ cannot form configurations ($iii$), ($iv$), or ($v$).
Additionally, if $\lambda_{r}-i$ is less than the element below it that requires that element to be a large number.
Thus, in order for $\lambda_{r}-i$ to form configuration ($ii$) we require $b$ and $c$ to be large numbers ($b$ is required as only $\lambda_{r}-1$ can be above a large number in the decreasing part of the row), however large numbers are increasing left to right.
Finally, consider large numbers below row $r$ that used to be copies of $\lambda_r-1$.
Note that these must appear at either the front on end of their row and the entry grew into a larger number.
Therefore, we only need to consider configurations ($i$), ($ii$), and ($iii$).
Configuration ($i$) cannot occur as the first entry of the next row is a large number (it cannot be a small number as $\lambda_r-1$ was the largest small number).
Configuration ($ii$) cannot occur as large numbers increase left to right.
Configuration ($iii$) cannot occur as in order to have $x < y < z$ where $y$ is our newly created large number we require that $x$ is not a large number, however this then requires that $x < \lambda_r-1$ as the largest small number was $\lambda_r-1$.
This would then mean that the configuration already occurred in our starting tableaux, a contradiction.

Thus, when $\hat{\theta}$ and $\theta / \hat{\theta}$ are connected only cases (1) and (3) apply meaning that:
\begin{equation}\label{eqn:row1}
    |\textrm{SDT}(\mu,\lambda,p)| \geq  |\textrm{SDT}(\hat{\mu},\lambda,p-a)| + |\textrm{SDT}(\hat{\mu},\lambda,p-a+1)|.
\end{equation}
When $\hat{\theta}$ and $\theta / \hat{\theta}$ are disconnected and $|\theta / \hat{\theta}|=1$ cases (1), (2), and (3) apply with (2) having two options, thus:
\begin{equation}\label{eqn:row2}
    |\textrm{SDT}(\mu,\lambda,p)| \geq  2|\textrm{SDT}(\hat{\mu},\lambda,p-1)| + 2|\textrm{SDT}(\hat{\mu},\lambda,p)|.
\end{equation}
Finally, when $\hat{\theta}$ and $\theta / \hat{\theta}$ are disconnected and $|\theta / \hat{\theta}|=a > 1$ all four cases apply with (2) and (4) having two options, thus:
\begin{equation}\label{eqn:row3}
    |\textrm{SDT}(\mu,\lambda,p)| \geq  2|\textrm{SDT}(\hat{\mu},\lambda,p-a)| + 3|\textrm{SDT}(\hat{\mu},\lambda,p-a+1)|+ |\textrm{SDT}(\hat{\mu},\lambda,p-a+2)|.
\end{equation}

For the inverse remove all the large numbers from row $r$ besides the left most one and remove $\lambda_r-1$ from row $r$, if it exists, shifting elements to it's right leftwards.
Then, if there is no $\lambda_r - 1$ below row $r$ convert the smallest large numbers to $\lambda_r -1$.
Finally, lower the existing large numbers as necessary.
As each row above row $r$ is forced to only have a large number on the front this map is in fact invertible, given the addition of a token saying which map to get from $\hat{\mu}$ to $\mu$.
As row $r$ is forced to only have $|\hat{\theta}|$ or $|\hat{\theta}|-1$ large/small numbers (due to Lemma~\ref{tabsetup}), this inverse map accounts for every possible SDT.
Further, note that if the resulting tableau is not an SDT the configuration witnessing it not being an SDT would have still existed in the starting tableau.
Therefore, equations (\ref{eqn:row1}), (\ref{eqn:row2}), and (\ref{eqn:row3}) all become equalities.
\end{proof}

\begin{example}
    For $\lambda = (6,2,1)$ and $\mu = (6,5,2)$, we have that $\theta-\hat{\theta}$ is a row of size $3$ that is connected to $\hat{\theta}$.
    Therefore, from elements of $\SDT{(\hat{\mu},\lambda,1)}$ for $\hat{\mu} = (6,2,1)$ we can create element of $\SDT{(\mu,\lambda,4)}$ from case $(1)$ and an element of $\SDT{(\mu,\lambda,3)}$ from case $(3)$, pictured below.
    \begin{center}
\ytableausetup{smalltableaux}
\begin{ytableau}
*(green) 7&4&3&2&1&0\\
\none & *(cyan) 5 & 1 & 0\\
\none & \none & 0 & *(cyan) 1
\end{ytableau}
$\rightarrow$
\begin{ytableau}
*(green) 9&4&3&2&1&0\\
\none & *(cyan) 5 & 1 & 0 & *(green) 7 & *(green) 8\\
\none & \none & 0 & *(cyan) 1\\
\none\\
\none & \none & \none & \none[(1)]
\end{ytableau}
$\mid$
\begin{ytableau}
*(green) 8&4&3&2&1&0\\
\none & *(cyan) 5 & 1 & 0 & *(cyan) 5 & *(green) 7\\
\none & \none & 0 & *(cyan) 1\\
\none\\
\none & \none & \none & \none[(3)]
\end{ytableau}
\end{center}
\ytableausetup{nosmalltableaux}
\end{example}

\begin{example}\label{ex:rowsize1}
    For $\lambda = (5,1)$ and $\mu = (6,3,1)$, we have that $\theta-\hat{\theta}$ is a row of size $1$ that is not connected to $\hat{\theta}$.
    Therefore, from elements of $\SDT{(\hat{\mu},\lambda,2)}$ for $\hat{\mu} = (5,3,1)$ we can create two elements of $\SDT{(\mu,\lambda,4)}$ from cases $(1)$ and $(2)^*$ and two elements of $\SDT{(\mu,\lambda,3)}$ from cases $(2)$ and $(3)$, pictured below.
    \begin{center}
\ytableausetup{smalltableaux}
\begin{ytableau}
*(green) 7&3&2&1&0\\
\none & *(green) 6 & 0 & *(Green) 6\\
\none & \none & *(cyan) 4
\end{ytableau}
$\rightarrow$
\begin{ytableau}
*(green) 7&3&2&1&0&*(green) 8\\
\none & *(green) 6 & 0 & *(Green) 6\\
\none & \none & *(cyan) 4\\
\none\\
\none & \none & \none[(1)]
\end{ytableau}
,
\begin{ytableau}
*(green)8&*(red)4&3&2&1&0\\
\none & *(green) 7 & 0 & *(Green) 7\\
\none & \none & *(green) 6\\
\none\\
\none & \none & \none[(2)^*]
\end{ytableau}
$\mid$
\begin{ytableau}
*(green)7&*(red)4&3&2&1&0\\
\none & *(green) 6 & 0 & *(Green) 6\\
\none & \none & *(cyan) 4\\
\none\\
\none & \none & \none[(2)]
\end{ytableau}
,
\begin{ytableau}
*(green) 7&3&2&1&0&*(Green) 7\\
\none & *(green) 6 & 0 & *(Green) 6\\
\none & \none & *(cyan) 4\\
\none\\
\none & \none & \none[(3)]
\end{ytableau}
\end{center}
\ytableausetup{nosmalltableaux}
\end{example}

\begin{example}
    For $\lambda = (4,1)$ and $\mu = (7,3,1)$, we have that $\theta-\hat{\theta}$ is a row of size $3$ that is not connected to $\hat{\theta}$.
    Therefore, from elements of $\SDT{(\hat{\mu},\lambda,1)}$ for $\hat{\mu} = (4,3,1)$ we can create two elements of $\SDT{(\mu,\lambda,4)}$ from cases $(1)$ and $(2)^*$, three elements of $\SDT{(\mu,\lambda,3)}$ from cases $(2)$, $(3)$, and $(4)^*$, and an element of $\SDT{(\mu,\lambda,3)}$ from case $(4)$, pictured below.
    \begin{center}
    \begin{tabular}{cccc}
\ytableausetup{smalltableaux}
\begin{ytableau}
*(green) 5&2&1&0\\
\none & *(cyan) 3 & 0 & *(cyan) 3\\
\none & \none & *(cyan) 0
\end{ytableau}
$\rightarrow$ &
\begin{ytableau} 
*(green) 5&2&1&0&*(green) 6 & *(green) 7 & *(green) 8\\
\none & *(cyan) 3 & 0 & *(cyan) 3\\
\none & \none & *(cyan) 0\\
\none\\
\none & \none & \none[(1)]
\end{ytableau}
,&
\begin{ytableau}
*(green) 6&*(red) 3 &2&1&0 & *(green) 7 & *(green) 8\\
\none & *(green) 5 & 0 & *(Green) 5\\
\none & \none & *(cyan) 0\\
\none\\
\none & \none & \none[(2)^*]
\end{ytableau}
\ $\mid$&
\begin{ytableau}
*(green) 5&*(red) 3 &2&1&0 & *(green) 6 & *(green) 7\\
\none & *(cyan) 3 & 0 & *(cyan) 3\\
\none & \none & *(cyan) 0\\
\none\\
\none & \none & \none[(2)]
\end{ytableau}
,\\\\
&\begin{ytableau}
*(green) 5&2&1&0&*(Green) 5 & *(green) 6 & *(green) 7\\
\none & *(cyan) 3 & 0 & *(cyan) 3\\
\none & \none & *(cyan) 0\\
\none\\
\none & \none & \none[(3)]
\end{ytableau}
,&
\begin{ytableau}
*(green) 6&*(red) 3 &2&1&0 & *(Green) 6 & *(green) 7\\
\none & *(green) 5 & 0 & *(Green) 5\\
\none & \none & *(cyan) 0\\
\none\\
\none & \none & \none[(4)^*]
\end{ytableau}
\ $\mid$&
\begin{ytableau}
*(green) 5&*(red) 3 &2&1&0 & *(Green) 5 & *(green) 6\\
\none & *(cyan) 3 & 0 & *(cyan) 3\\
\none & \none & *(cyan) 0\\
\none\\
\none & \none & \none[(4)]
\end{ytableau}
\end{tabular}
\end{center}
\ytableausetup{nosmalltableaux}
\end{example}

\begin{lemma}\label{lem:colRecursion}
    If $\theta/\hat{\theta}$ is a column then $\textrm{SDT}(\mu, \lambda, p)$ follows the recursion in Theorem \ref{thm:buchravikumar}, where $\theta = \mu \setminus \lambda$. 
\end{lemma}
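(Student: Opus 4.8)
\emph{Approach.} The plan is to mirror the proof of Lemma~\ref{lem:rowRecursion}, transposing it from a row-shaped north-east arm to a column-shaped one, so as to show that $|\textrm{SDT}(\mu,\lambda,p)|$ obeys the same recursion as $C(\theta,p)$ in Theorem~\ref{thm:buchravikumar}, where $\theta=\mu\setminus\lambda$. Since a single box is simultaneously a row and a column, Lemma~\ref{lem:rowRecursion} already covers $|\theta\setminus\hat\theta|=1$, so I may assume the north-east arm $\theta\setminus\hat\theta$ is a column of $a\ge 2$ boxes; write it as the right ends of rows $r,r+1,\dots,r+a-1$ of $\mu$, sitting in column $\gamma$, so that $\hat\mu$ agrees with $\mu$ except $\hat\mu_{r+j}=\mu_{r+j}-1$ for $0\le j\le a-1$. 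The first step is to pin down the geometry: because $\theta\setminus\hat\theta$ is the north-east arm, intersecting $\theta$ with the $a\times a$ square whose upper-right corner is the north-east box of $\theta$ must return exactly this column, which forces each of rows $r,\dots,r+a-1$ of $\theta$ to be a single box and hence each of those rows of $\hat\theta$ to be empty; combined with Lemmas~\ref{tabsetup}, \ref{lem:large_inc} and~\ref{lem:toprows}, this rigidly determines rows $1,\dots,r+a-1$ of every element of $\textrm{SDT}(\hat\mu,\lambda,p')$. I would also check that the only box of $\hat\theta$ that can be adjacent to the arm is the one directly below it, $(r+a,\gamma)$, so that $\theta\setminus\hat\theta$ is connected to $\hat\theta$ exactly when $(r+a,\gamma)\in\hat\theta$; as in the row case, this dichotomy governs whether the modifications in cases~(2) and~(4) below are available.

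Following the case split of Lemma~\ref{lem:rowRecursion}, I would then build maps $\textrm{SDT}(\hat\mu,\lambda,p')\to\textrm{SDT}(\mu,\lambda,p)$ that adjoin the column of $a$ boxes: (1) fill the new boxes with $a$ fresh large numbers (possibly swapping with small numbers in higher rows, as in case~(1) there), so $p'=p-a$; (2), available only when disconnected, use an appropriate small number $\lambda_i-1$ in place of one fresh large number, with the extra option of converting the matching copy below the arm, so $p'=p-a+1$ with multiplicity two; (3) use a duplicated entry in place of one fresh large number, so $p'=p-a+1$; and (4), available only when disconnected since it needs $a>1$, combine the modifications of~(2) and~(3), so $p'=p-a+2$, again with multiplicity two. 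Summing: in the connected case only~(1) and~(3) apply, giving $C(\hat\theta,p-a)+C(\hat\theta,p-a+1)$, which is part~(d); and in the disconnected case all four apply, giving $2C(\hat\theta,p-a)+3C(\hat\theta,p-a+1)+C(\hat\theta,p-a+2)$, which is part~(f) (part~(e) being the $a=1$ case already handled by Lemma~\ref{lem:rowRecursion}).

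To confirm each output is a genuine SDT I would run through the five forbidden configurations of Lemma~\ref{lem:config}. The point with no analogue in the row proof is that the $a$ new boxes are stacked vertically in column $\gamma$, so I must rule out configurations created by these new vertical adjacencies, above all (i) and (ii) between consecutive rows $r+j$ and $r+j+1$: this is exactly where the arrangement of the added large numbers and the small numbers they displace must be chosen so that the front of row $r+j$ strictly dominates whatever is stacked below it in row $r+j+1$. The monotonicity of Lemma~\ref{lem:large_inc} then kills (ii)--(v), and the inserted copies of $\lambda_i-1$ together with any conversions are handled verbatim as in Lemma~\ref{lem:rowRecursion}. For injectivity and surjectivity I would run the inverse on an arbitrary $T'\in\textrm{SDT}(\mu,\lambda,p)$ --- strip the column boxes, undo any inserted $\lambda_i-1$ and any conversions, and re-lower the large numbers --- and use the rigidity of rows $1,\dots,r+a-1$ established above to see that $T'$ arises from exactly one of the four maps (keeping a token for which one); since any forbidden configuration in the output would already be present in $T'$, the inequalities collapse to equalities and the recursion holds.

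\emph{Main obstacle.} I expect the crux to be precisely the vertical-adjacency bookkeeping in the configuration check: a column-shaped arm creates adjacencies along column $\gamma$ that a row-shaped arm never produces, and making the four constructions explicit enough --- in particular getting the added large numbers and the optional conversions to propagate to the row fronts in just the right way --- so that configurations~(i) and~(ii) along $\gamma$ cannot arise is where the row-versus-column distinction genuinely bites. A secondary difficulty is verifying that ``connected'' really does reduce to the single condition $(r+a,\gamma)\in\hat\theta$, so that cases~(2) and~(4) become available exactly when parts~(d)--(f) of Theorem~\ref{thm:buchravikumar} require.
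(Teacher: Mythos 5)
There is a genuine gap here, and it is exactly at the point you flag as the ``main obstacle'': your four maps do not land in $\textrm{SDT}(\mu,\lambda,p)$ once the arm has $a\ge 2$ boxes, and no assignment of values to the new boxes can repair this. In your own setup the arm cells are the right ends of rows $r,\dots,r+a-1$ of $\mu$, all in column $\gamma$, so consecutive arm rows $i$ and $i+1$ both end in column $\gamma$. Avoiding configuration (i) of Lemma~\ref{lem:config} forces the first entry of row $i$ to exceed the last entry of row $i+1$; avoiding configuration (iv), taken with $z$ the last entry of row $i$ and $x$ the entry of row $i+1$ directly below it, then forces the first entry of row $i$ to exceed the last entry of row $i$ as well. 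But in each of your cases the last entry of row $i$ (for $i\le r+a-2$) is a large number or a duplicate of a large/small front, and then the first entry of row $i$ is at most it: if that front is a large/small number this is Lemma~\ref{lem:large_inc}, since it is read earlier in the same row, and any other front is at most $\lambda_1-1$, hence smaller than every large number. This is a contradiction, so every element of $\textrm{SDT}(\mu,\lambda,p)$ with a column arm must absorb the extra cell of rows $r,\dots,r+a-2$ at the \emph{front} of the row (a large number followed by the full run $\lambda_i-1,\dots,0$), with only the bottom arm row admitting a front-or-back choice --- compare Lemmas~\ref{lem:colDiagonal} and~\ref{lem:colNotDiag}. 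Consequently your case (1) map (and likewise (2)--(4), which still place large numbers at the ends of at least two consecutive arm rows when $a\ge 2$, or place a small number after a larger large number in reading order) produces tableaux that are not SDT, and the proposed inverse ``strip the column boxes'' does not reflect the actual structure; the inequalities never get off the ground, so the recursion is not established. This is a structural obstruction to transposing Lemma~\ref{lem:rowRecursion}, not a bookkeeping issue.

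The paper's proof avoids this by recursing on $\lambda$ instead of $\mu$ in the column case: it keeps the shape $\mu$ fixed, absorbs the arm into $\hat{\lambda}$ (so $\hat{\theta}=\mu\setminus\hat{\lambda}$), and builds maps $\textrm{SDT}(\mu,\hat{\lambda},p')\to\textrm{SDT}(\mu,\lambda,p)$ that convert the small numbers $\hat{\lambda}_i-1$ attached to the lengthened rows into large numbers, with the only boundary modification happening at the end of the single bottom row $r$ of the column (appending a new large number or a duplicate of that row's front after removing $\hat{\lambda}_r-1$). Since only one row end is ever touched, no vertical stacking of large numbers arises, and counting the conversion/unconversion options reproduces parts (d)--(f) of Theorem~\ref{thm:buchravikumar}. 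To salvage your outline you would need to replace the ``add a column of boxes to $\hat{\mu}$'' maps by this change-of-$\lambda$ recursion (or first prove the structural description of column-arm SDTs above and build a bijection directly from it); as written, the row-case constructions do not transpose.
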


\begin{proof}
    Consider a tableau in $\textrm{SDT}(\mu, \hat{\lambda}, p)$ we can transform this tableau into an element of $\textrm{SDT}(\mu, \lambda)$, where $\mu -\lambda$ and $\mu-\hat{\lambda}$ is a rim, and $\lambda-\hat{\lambda}$ is a column of $a$ boxes with lowest row $r$, in one of the three following ways:
\begin{enumerate}
    \item Convert small numbers to large numbers corresponding to the added column.
    \item Remove $\hat{\lambda}_r-1$ from row $r$, add in a new large number to the end of row $r$, and convert small numbers to large numbers corresponding to added column besides the bottom box.
    \item Remove $\hat{\lambda}_r-1$ from row $r$, add in a duplicate of the front of row $r$ to the end of row $r$, and convert small numbers to large numbers corresponding to added column besides the bottom box.
\end{enumerate}

Note that if $\hat{\lambda}-\lambda$ is connected to $\mu-\hat{\lambda}$, then we cannot change values of row $r$ besides the front number without breaking the SDT conditions, therefore only case (1) holds.
We can choose to unconvert the smallest converted small number to get a tableau with one less large number as long as the column has more than one box or we are in case (1).
Note that when there is no box below the end of row $r$ making the last box a large/small number to the end of a row cannot create a new configuration.
Similarly converting small numbers and shifting the row will not make new configurations.

Thus, when $\hat{\theta}$ and $\theta/\hat{\theta}$ are connected we have only case (1), but may choose to unconvert the smallest large number giving that:
\begin{equation}\label{e:col1}
    |\textrm{SDT}(\mu, \lambda, p)| \geq |\textrm{SDT}(\mu, \hat{\lambda},p-a)| + |\textrm{SDT}(\mu, \hat{\lambda},p-a-1)|.
\end{equation}

When $\hat{\theta}$ and $\theta/\hat{\theta}$ are not connected and $|\theta/\hat{\theta}|=1$ we have cases (1), (2), and (3) but may only choose to unconvert the smallest large number in case (1).
This gives that:
\begin{equation}\label{e:col2}
    |\textrm{SDT}(\mu, \lambda, p)| \geq 2|\textrm{SDT}(\mu, \hat{\lambda},p-a)| + 2|\textrm{SDT}(\mu, \hat{\lambda},p-a-1)|.
\end{equation}

Finally, When $\hat{\theta}$ and $\theta/\hat{\theta}$ are not connected and $|\theta/\hat{\theta}|>1$ we have all three cases and may choose to unconvert the smallest large number in each.
Therefore,
\begin{equation}\label{e:col3}
    |\textrm{SDT}(\mu, \lambda, p)| \geq 2|\textrm{SDT}(\mu, \hat{\lambda},p-a)| + 3|\textrm{SDT}(\mu, \hat{\lambda},p-a-1)|+ |\textrm{SDT}(\mu, \hat{\lambda},p-a-2)|.
\end{equation}

For the inverse, if there is a large number at the end of row $r$ remove it, shifted all other non-large numbers right one, and add in $\hat{\lambda}_r-1$ to the empty spot made.
Then convert all the large numbers corresponding to the added rows to small numbers.
Note this is in fact an inverse, given the addition of a token saying which map to get from $\hat{\lambda}$ to $\lambda$, as you can only have a large number at the end of row r in cases (2) and (3) which are distinguished by the existence of a duplicate.
Further, if the resulting tableau has a configuration it would necessarily mean the original tableau also has the configuration.
Therefore, equations (\ref{e:col1}), (\ref{e:col2}), and (\ref{e:col3}) become equalities.

\end{proof}

\begin{example}
    For $\lambda = (4,3,1)$ and $\mu = (5,4,3,1)$, we have that $\theta-\hat{\theta}$ is a column of size $2$ that is connected to $\hat{\theta}$.
    Therefore, from elements of $\SDT{(\mu,\hat{\lambda},2)}$ for $\hat{\lambda} = (5,4,1)$ we can create element of $\SDT{(\mu,\lambda,4)}$ from case $(1)$ and an element of $\SDT{(\mu,\lambda,3)}$ from case $(1)^*$, pictured below.
    \begin{center}
\ytableausetup{smalltableaux}
\begin{ytableau}
*(green) 7&3&2&1&0\\
\none & *(green) 6& 2 & 1 & 0\\
\none & \none & *(cyan) 4 & 0 & *(cyan) 4\\
\none & \none & \none & *(cyan) 3
\end{ytableau}
$\rightarrow$
\begin{ytableau}
*(green) 8&3&2&1&0\\
\none & *(green) 7& 2 & 1 & 0\\
\none & \none & *(green) 6 & 0 & *(Green) 6\\
\none & \none & \none & *(green) 5\\
\none \\
\none & \none & \none & \none[(1)]
\end{ytableau}
$\mid$
\begin{ytableau}
*(green) 7&3&2&1&0\\
\none & *(green) 6& 2 & 1 & 0\\
\none & \none & *(green) 5 & 0 & *(Green) 5\\
\none & \none & \none & *(cyan) 2\\
\none \\
\none & \none & \none & \none[(1)^*]
\end{ytableau}
\end{center}
\ytableausetup{nosmalltableaux}
\end{example}

\begin{example}
    For $\lambda = (5,1)$ and $\mu = (6,3,1)$, we have that $\theta-\hat{\theta}$ is a column of size $1$ that is not connected to $\hat{\theta}$.
    Therefore, from elements of $\SDT{(\mu,\hat{\lambda},2)}$ for $\hat{\lambda} = (6,1)$ we can create two elements of $\SDT{(\mu,\lambda,4)}$ from cases $(1)$ and $(2)^*$ and two elements of $\SDT{(\mu,\lambda,3)}$ from cases $(2)$ and $(3)$, pictured below.
    Note the tableaux created are the same ones from example~\ref{ex:rowsize1} as a single block can be consider a row or a column.
    \begin{center}
\ytableausetup{smalltableaux}
\begin{ytableau}
*(green) 8&4&3&2&1&0\\
\none & *(green) 7 & 0 & *(Green) 7\\
\none & \none & *(cyan) 5
\end{ytableau}
$\rightarrow$
\begin{ytableau}
*(green)8&4&3&2&1&0\\
\none & *(green) 7 & 0 & *(Green) 7\\
\none & \none & *(green) 6\\
\none\\
\none & \none & \none[(1)]
\end{ytableau}
,
\begin{ytableau}
*(green) 7&3&2&1&0&*(green) 8\\
\none & *(green) 6 & 0 & *(Green) 6\\
\none & \none & *(cyan) 4\\
\none\\
\none & \none & \none[(2)]
\end{ytableau}
$\mid$
\begin{ytableau}
*(green)7&4&3&2&1&0\\
\none & *(green) 6 & 0 & *(Green) 6\\
\none & \none & *(cyan) 4\\
\none\\
\none & \none & \none[(1)^*]
\end{ytableau}
,
\begin{ytableau}
*(green) 7&3&2&1&0&*(Green) 7\\
\none & *(green) 6 & 0 & *(Green) 6\\
\none & \none & *(cyan) 4\\
\none\\
\none & \none & \none[(3)]
\end{ytableau}
\end{center}
\ytableausetup{nosmalltableaux}
\end{example}

\begin{example}
    For $\lambda = (5,4,1)$ and $\mu = (6,5,3,1)$, we have that $\theta-\hat{\theta}$ is a row of size $2$ that is not connected to $\hat{\theta}$.
    Therefore, from elements of $\SDT{(\mu,\hat{\lambda},1)}$ for $\hat{\lambda} = (6,5,1)$ we can create two elements of $\SDT{(\mu,\lambda,3)}$ from cases $(1)$ and $(2)$, three elements of $\SDT{(\mu,\lambda,2)}$ from cases $(1)^*$, $(2)^*$, and $(3)$, and an element of $\SDT{(\mu,\lambda,1)}$ from case $(3)^*$, pictured below.
    \begin{center}
    \begin{tabular}{cccc}
\ytableausetup{smalltableaux}
\begin{ytableau}
*(green) 7&4&3&2&1&0\\
\none & *(cyan) 5&3&2&1 & 0\\
\none & \none & *(cyan) 4 & 0 & *(cyan) 4\\
\none & \none & \none & *(cyan) 0
\end{ytableau}
$\rightarrow$ &
\begin{ytableau}
*(green) 8&4&3&2&1&0\\
\none & *(green) 7&3&2&1 & 0\\
\none & \none & *(green) 6 & 0 & *(Green) 6\\
\none & \none & \none & *(cyan) 0\\
\none\\
\none & \none & \none & \none[(1)]
\end{ytableau}
,&
\begin{ytableau}
*(green) 8&4&3&2&1&0\\
\none & *(green) 6&2&1 & 0 & *(green) 7\\
\none & \none & *(cyan) 3 & 0 & *(cyan) 3\\
\none & \none & \none & *(cyan) 0\\
\none\\
\none & \none & \none & \none[(2)]
\end{ytableau}
\ $\mid$&
\begin{ytableau}
*(green) 7&4&3&2&1&0\\
\none & *(green) 6&3&2&1 & 0\\
\none & \none & *(cyan) 3 & 0 & *(cyan) 3\\
\none & \none & \none & *(cyan) 0\\
\none\\
\none & \none & \none & \none[(1)^*]
\end{ytableau}
,\\\\
&\begin{ytableau}
*(green) 7&4&3&2&1&0\\
\none & *(cyan) 4&2&1 & 0 & *(green) 6\\
\none & \none & *(cyan) 3 & 0 & *(cyan) 3\\
\none & \none & \none & *(cyan) 0\\
\none\\
\none & \none & \none & \none[(2)^*]
\end{ytableau}
,&
\begin{ytableau}
*(green) 7&4&3&2&1&0\\
\none & *(green) 6&2&1 & 0 & *(Green) 6\\
\none & \none & *(cyan) 3 & 0 & *(cyan) 3\\
\none & \none & \none & *(cyan) 0\\
\none\\
\none & \none & \none & \none[(3)]
\end{ytableau}
\ $\mid$&
\begin{ytableau}
*(green) 6&4&3&2&1&0\\
\none & *(cyan) 4&2&1 & 0 & *(cyan) 4\\
\none & \none & *(cyan) 3 & 0 & *(cyan) 3\\
\none & \none & \none & *(cyan) 0\\
\none\\
\none & \none & \none & \none[(3)^*]
\end{ytableau}
\end{tabular}
\end{center}
\ytableausetup{nosmalltableaux}
\end{example}

\arroyoPieri*

\begin{proof}
    Recall from Theorem~\ref{thm:buchravikumar} that $c^{\mu}_{\lambda,p}$ is the Littlewood-Richardson coefficient for multiplying $GQ_{\lambda}$ and $GQ_p$.
    If $\lambda/\mu$ is not a rim, then by Lemma~\ref{notrim} $|\SDT(\mu, \lambda, p)| = c^{\mu}_{\lambda,p} = 0$.
    Let $\lambda^0 \supseteq \lambda^1 \supseteq \cdots \supseteq \lambda^n$ and $\mu^0 \subseteq \mu^1 \subseteq \cdots \subseteq \mu^n$ be sequences of strict partitions such that:
    \begin{itemize}
        \item $\lambda^n = \lambda$ and $\mu^n = \mu$,
        \item $\mu^0 / \lambda^0$ is a single row or column,
        \item for each $i\in [n]$ either $\lambda^{i-1}=\lambda^{i}$ or $\mu^{i-1}=\mu^{i}$,
        \item for each $i\in [n]$, $\mu^i / \lambda^i - \mu^{i-1} / \lambda^{i-1}$ is the northeast arm of $\mu^i / \lambda^i$.
    \end{itemize}
    By Lemmas~\ref{lem:rowDiag},~\ref{lem:rowNotDiag},~\ref{lem:colDiagonal}, and ~\ref{lem:colNotDiag}, we have that $\SDT(\mu^0,\lambda^0,p)=c^{\mu}_{\lambda,p}$.
    Then by Lemmas~\ref{lem:rowRecursion} and ~\ref{lem:colRecursion} and induction we have that $\SDT(\mu^i,\lambda^i,p)=c^{\mu^i}_{\lambda^i,p}$ for each $i\in[n]$.
\end{proof}

\begin{remark}

It would be helpful to create some sort of formula for $|\SDT{(\mu, w(\lambda, a, b))}|$ to make the computation easier.
Some challenges with making such a formula is that large/small numbers are no longer in increasing order, numbers smaller than $\lambda_i-1$ can appear below row $i$, and $\mu / \lambda$ no longer needs to be a rim to get a nonzero amount of tableaux.

\begin{figure}[h!]
    \centering
    \ytableausetup{smalltableaux}
    \begin{ytableau}
*(green)7&*(green)6&*(green)5&1&0\\
\none & *(green) 6& *(green) 5 & *(green) 4& 0\\
\none & \none & *(green) 5 & *(green) 4 & *(green) 3
\end{ytableau}\;
    \begin{ytableau}
*(green)7&*(green)6&*(green)5&1&0\\
\none & *(green) 6& *(green) 5 & *(green) 3& *(green) 4\\
\none & \none & *(green) 5 & *(cyan) 0 & *(green) 4
\end{ytableau}\;
    \begin{ytableau}
*(green)7&*(green)6&*(green)4&0&*(green)5\\
\none & *(green) 6& *(green) 4 & *(green) 3& *(green) 5\\
\none & \none & *(cyan) 0 & *(cyan) 1 & *(green) 5
\end{ytableau}\;
    \begin{ytableau}
*(green)7&*(green)6&*(green)3&*(green)4&*(green)5\\
\none & *(green) 6& *(red) 0 & *(green) 4& *(green) 5\\
\none & \none & *(cyan) 0 & *(cyan) 1 & *(green) 5
\end{ytableau}
\ytableausetup{nosmalltableaux}
    \caption{For $\lambda = (2,1)$ and $\mu = (5, 4, 3)$, we have that $|\SDT{(\mu, w(\lambda, 3, 3))}| = 4$.}
    \label{fig:trapezoid}
\end{figure}

\end{remark}

\bibliographystyle{plain}
\bibliography{references}

@article{buch2012pieri,
  title={Pieri rules for the K-theory of cominuscule Grassmannians},
  author={Buch, Anders Skovsted and Ravikumar, Vijay},
  journal={Journal f{\"u}r die reine und angewandte Mathematik (Crelles Journal)},
  volume={2012},
  number={668},
  pages={109--132},
  year={2012},
  publisher={De Gruyter}
}

@article{ikeda2013k,
  title={{$K$}-theoretic analogues of factorial {S}chur {$P$}-and {$Q$}-functions},
  author={Ikeda, Takeshi and Naruse, Hiroshi},
  journal={Advances in Mathematics},
  volume={243},
  pages={22--66},
  year={2013},
  publisher={Elsevier}
}

@misc{arroyo2025typeckstanleysymmetric,
      title={Type C $K$-Stanley symmetric functions and Kra\'skiewicz-Hecke insertion}, 
      author={Joshua Arroyo and Zachary Hamaker and Graham Hawkes and Jianping Pan},
      year={2025},
      eprint={2503.16641},
      archivePrefix={arXiv},
      primaryClass={math.CO},
      url={https://arxiv.org/abs/2503.16641}, 
}

@article{kirillov2017construction,
  title={Construction of double {G}rothendieck polynomials of classical types using id{C}oxeter algebras},
  author={Kirillov, Anatol N and Naruse, Hiroshi},
  journal={Tokyo Journal of Mathematics},
  volume={39},
  number={3},
  pages={695--728},
  year={2017},
  publisher={Publication Committee for the Tokyo Journal of Mathematics}
}

@article{clifford2014ktheoretic,
    author = {Edward Clifford and Hugh Thomas and Alexander Yong},
    title = {$K$-theoretic Schubert calculus for OG($n, 2n+1$) and jeu de taquin for shifted increasing tableaux},
    journal = {Journal f{\"u}r die reine und angewandte Mathematik
(Crelles Journal)},
    year = {2014},
    pages = {51--63}
}

@article{lewis2024combinatorial,
   title={Combinatorial formulas for shifted dual stable Grothendieck polynomials},
   volume={12},
   ISSN={2050-5094},
   url={http://dx.doi.org/10.1017/fms.2024.8},
   DOI={10.1017/fms.2024.8},
   journal={Forum of Mathematics, Sigma},
   publisher={Cambridge University Press (CUP)},
   author={Lewis, Joel and Marberg, Eric},
   year={2024} }

\end{document}